\documentclass[12pt,oneside,reqno]{amsart}
\usepackage{amsmath,amssymb,amsthm}
\usepackage[margin=1.1in]{geometry}
\usepackage{booktabs}
\usepackage{xurl}
\usepackage[hidelinks]{hyperref}
\hypersetup{pdftitle={Fuglede's conjecture holds in eight dimensions over the field with two elements},pdfauthor={Samuel J. Ferguson and Joshua Iosevich}}
\numberwithin{equation}{section}
\newtheorem{theorem}{Theorem}[section]
\newtheorem{lemma}[theorem]{Lemma}
\newtheorem{proposition}[theorem]{Proposition}
\newtheorem{corollary}[theorem]{Corollary}
\theoremstyle{definition}
\newtheorem{definition}[theorem]{Definition}
\newtheorem{question}[theorem]{Question}

\theoremstyle{remark}

\title[Fuglede's conjecture holds in eight dimensions]{Fuglede's conjecture holds in eight dimensions over the field with two elements}
\author{Samuel J. Ferguson}
\address{Department of Mathematics, New York University}
\email{sjf370@nyu.edu}
\author{Joshua Iosevich}
\address{School of Mathematics and Statistics, Rochester Institute of Technology, 1 Lomb Memorial Drive, Rochester, NY 14623, USA}
\email{ji1964@rit.edu}
\subjclass[2020]{Primary 43A70; Secondary 05B20, 05B45, 15B34}
\keywords{Fuglede's conjecture, spectral set, translational tiling, Hadamard matrix, finite abelian group}
\date{October 26, 2026}

\begin{document}
\begin{abstract}
Fuglede's conjecture in a finite abelian group says that a nonempty subset tiles by translation if and only if it is spectral, meaning that the space of complex-valued functions on the subset has an orthogonal basis consisting of restrictions of group characters. We prove the conjecture in the eight-dimensional vector space over the field with two elements. In fact, every spectral set and every tile in this space has a tiling complement which is a linear subspace. We first prove that the Fourier support of a nonempty proper subset of density different from one half contains a two-dimensional subspace. Consequently, every spectral set of density one quarter has a subspace tiling complement. This gives a simpler proof of Kadowaki's dimension-seven result, replacing his use of the classification of Hadamard matrices of order 32 by the classical five classes of order 16. Our dimension-eight proof uses exact finite verifications involving Hadamard matrices of order 32 and perfect codes of length 15. We include the proofs of the reductions used from Kadowaki's preprint, so that neither positive result depends on his dimension-seven theorem. Finally, we give an elementary spectral example of density one eighth in dimension nine whose Fourier support contains no three-dimensional subspace. Thus, the Fourier-support argument at quarter density does not extend in this form to eighth density. Dimension nine is the only case still unresolved over the field with two elements, and this obstruction shows why the present Fourier-support method cannot resolve it as it stands.
\end{abstract}
\maketitle
{\small\setlength{\parskip}{0pt}\tableofcontents}

\section{Introduction}
\label{sec:introduction}

In this paper, we prove Fuglede's conjecture in the eight-dimensional vector space over the field with two elements. We also give a new, simpler proof of the seven-dimensional case, recently established in a preprint of Kadowaki \cite{Kadowaki2026}. Our main observation concerns spectral sets of density one quarter and applies in every dimension. We show, however, that the corresponding observation at density one eighth is false, even for a spectral set which tiles.

We briefly review Fuglede's conjecture \cite{Fuglede1974} in its original setting. Let $E$ be a measurable subset of $\mathbb R^d$ of finite, positive Lebesgue measure. We call $E$ a \emph{spectral set} if $L^2(E)$ has an orthogonal basis of complex exponentials
\[
 x\longmapsto e^{2\pi i\lambda\cdot x},\qquad \lambda\in\Lambda,
\]
for some set $\Lambda\subset\mathbb R^d$. The set $\Lambda$ is called a \emph{spectrum} for $E$. On the other hand, $E$ \emph{tiles by translation} if there is a set $T$ such that the translates $E+t$, $t\in T$, partition $\mathbb R^d$ up to sets of measure zero. Fuglede conjectured that these two properties are equivalent. The conjecture arose from his study of commuting self-adjoint realizations of partial differential operators: spectral sets provide a description of the domains on which such realizations exist, and tiling would provide a geometric description of the same domains.

Tao \cite{Tao2004} disproved the conjecture in dimensions five and higher using constructions in finite groups. Subsequent work gave counterexamples in lower dimensions and in the other direction; see \cite{Matolcsi2005,KolountzakisMatolcsi2006,FarkasMatolcsiMora2006}. These constructions also explain the interest in the finite-group version of the problem. In that setting, both spectrality and tiling can be expressed by finite systems of equations, but the relation between them remains difficult.

We write $\mathbb Z_p$ for the field of integers modulo a prime $p$. The vector space $\mathbb Z_p^d$ is considered as an additive group. If $E$ is a nonempty subset of this group, we say that $E$ \emph{tiles} if there is a subset $T$ for which every group element has exactly one representation $e+t$, with $e\in E$ and $t\in T$. We write
\[
 E\oplus T=\mathbb Z_p^d
\]
and call $T$ a \emph{tiling complement} of $E$. In particular, $|E||T|=p^d$. Here and below, $|A|$ denotes the cardinality of a finite set $A$.

The characters of $\mathbb Z_p^d$ are the functions $x\mapsto e^{2\pi i\lambda\cdot x/p}$, where $\lambda\in\mathbb Z_p^d$ and $\lambda\cdot x=\lambda_1x_1+\cdots+\lambda_dx_d$. The space $L^2(E)$ is the $|E|$-dimensional space of complex-valued functions on $E$, with inner product
\[
 \langle f,g\rangle=\sum_{x\in E}f(x)\overline{g(x)}.
\]
We call $E$ \emph{spectral} if the restrictions of the characters with frequencies in some set $\Lambda$ form an orthogonal basis of $L^2(E)$. Necessarily $|\Lambda|=|E|$. We again call $\Lambda$ a spectrum and call $(E,\Lambda)$ a \emph{spectral pair}. Fuglede's conjecture in $\mathbb Z_p^d$ asserts that its nonempty spectral sets are precisely its tiles.

Iosevich, Mayeli, and Pakianathan \cite{IosevichMayeliPakianathan2017} proved the conjecture in dimension two for every prime; another approach appears in \cite{KissEtAl2020}. Aten et al.\ \cite{AtenEtAl2017} developed the theory over prime fields, including the relation with log-Hadamard matrices, and obtained counterexamples in dimension five for every odd prime and in dimension four for primes congruent to three modulo four. Ferguson and Sothanaphan \cite{FergusonSothanaphan2020} extended the latter construction to all odd primes. They also proved the conjecture over $\mathbb Z_2$ in dimension four, reported computational proofs in dimensions five and six, and modified Tao's counterexample to disprove it in dimension ten. Further details of their computations were given in \cite{FergusonSothanaphan2019Addendum}. Their work left dimensions seven, eight, and nine unresolved over $\mathbb Z_2$.

Kadowaki's recent preprint \cite{Kadowaki2026} resolves dimension seven. His proof introduces three useful ingredients: a common spectrum for all tiling complements of a set with at most eight points, a restriction on spectral cardinalities obtained from integer matrix operations, and a projection from a spectral set onto a point set attached to a dephased log-Hadamard matrix. His proof of the seven-dimensional case also uses a scan of the complete classification of Hadamard matrices of order 32. We retain his three general ingredients, with proofs, and replace the order-32 calculation in dimension seven by our quarter-density argument. Thus we give a new proof of his theorem, while attributing the original result to him. His preprint has not yet been refereed; the proofs below do not assume its main theorem.

Our principal result is the following.

\begin{theorem}\label{thm:main}
Let $d=7$ or $d=8$, and let $E$ be a nonempty subset of $\mathbb Z_2^d$. The following are equivalent:
\begin{enumerate}
\item $E$ is spectral;
\item $E$ tiles $\mathbb Z_2^d$;
\item $E$ has a tiling complement which is a linear subspace.
\end{enumerate}
\end{theorem}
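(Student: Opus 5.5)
The implications (3)$\Rightarrow$(2) and (2)$\Rightarrow$(1) hold in every dimension. The first is trivial. For the second, I would use the standard fact over $\mathbb Z_2^d$ that a set with a subspace tiling complement $W$ is spectral with spectrum $W^\perp$. The reason is that $E$ is then a system of coset representatives for $W$, so the characters in $W^\perp$ restricted to $E$ behave like the characters of $\mathbb Z_2^d/W$. For the step from an arbitrary tiling complement to a subspace one, I would prove (2)$\Rightarrow$(3) directly: run the spectral argument below on $E$, which is spectral once some tiling complement is known. Alternatively I would cite the common-spectrum result for tiling complements of small sets taken from Kadowaki. So the real content is (1)$\Rightarrow$(3).

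For (1)$\Rightarrow$(3), let $(E,\Lambda)$ be a spectral pair. The first step is the cardinality restriction obtained from integer matrix operations on the $\pm1$ matrix $(-1)^{\lambda\cdot x}$. This $|E|\times|E|$ matrix is Hadamard, so $|E|\in\{1,2\}$ or $4\mid |E|$. The integer-rank argument should then force $|E|$ to be a power of $2$, which is the necessary condition for tiling.

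The second step uses the symmetry $(E,\Lambda)\leftrightarrow(\Lambda,E)$ of spectral pairs. Replacing a pair by its dual if necessary, I may assume $|E|\le 2^{d/2}$, i.e.\ $|E|\le 8$ for $d=7$ and $|E|\le 16$ for $d=8$.

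Sizes $1$ and $2$ are immediate. For density one half, and also for size $2$, any complementary hyperplane or line works. For $|E|=4$ I would use the quarter-density theorem. The Fourier support of $E-E$, or of the indicator of $E$, contains a two-dimensional subspace $V$. Then $E$ is a complete set of representatives for $\mathbb Z_2^d/V^\perp$ once we dualize correctly, which gives the subspace complement. The same theorem, applied to $\Lambda$ in place of $E$, handles density one quarter on the dual side. For $|E|=8$ I would invoke the common-spectrum result for sets with at most eight points, together with the projection onto a dephased log-Hadamard point set. The point set of a dephased Hadamard matrix of order $8$ is unique up to equivalence, so a direct argument should show that $E$ is affinely equivalent to a set with a subspace complement. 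This settles $d=7$.

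For $d=8$ the remaining and hardest case is $|E|=|\Lambda|=16$, density one sixteenth. Here I would use the projection of $E$ onto the columns of a dephased log-Hadamard matrix of order $16$. The matrix falls into one of the five classical Hadamard classes of order $16$, and in each class I would check by exact enumeration which embeddings into $\mathbb Z_2^8$ can occur. For each embedding I would then search for a $4$-dimensional subspace meeting $E-E$ only at $0$. When such a subspace fails to exist directly, I would pass to the larger configuration inside the order-$32$ Hadamard matrices. The perfect codes of length $15$ arise naturally when $E$ is viewed as the complement of a Hamming-type tiling, and they serve as a certificate.

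The main obstacle is exactly this density-$1/16$ verification in dimension eight. The other cases reduce cleanly to the quarter-density theorem or to the small-set reductions.
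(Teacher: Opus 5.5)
\textbf{Gap 1: the spectral direction.} Your second step is invalid. The symmetry $(E,\Lambda)\leftrightarrow(\Lambda,E)$ of spectral pairs preserves cardinality, because $|\Lambda|=|E|$, so passing to the dual can never reduce you to $|E|\le 2^{d/2}$. That reduction is available for tiling pairs, where $|E||T|=2^d$, but not for spectral pairs.

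As a result, the spectral sets of density $1/8$ are never treated: $|E|=16$ in $\mathbb Z_2^7$ and $|E|=32$ in $\mathbb Z_2^8$. In particular, ``this settles $d=7$'' is premature.
\begin{itemize}
\item The paper handles $|E|=16$, in any ambient dimension, by Hall's five classes with explicit projection certificates onto $\mathbb Z_2^4$ (Lemma~\ref{lem:small-spectral}). This is not the hard case.
\item The hard spectral case in dimension eight is $|E|=32$. The paper projects onto a canonical column set of rank at most eight (Lemma~\ref{lem:canonical}). It then checks subspace certificates for the $166$ extended classes of order-$32$ Hadamard matrices of that rank, in both orientations (Lemma~\ref{lem:order32}).
\end{itemize}

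Your use of the quarter-density theorem is also misplaced. It concerns $|E|=2^{d-2}$, not $|E|=4$, and it is applied to the spectrum: a plane $K\subseteq\operatorname{supp}W_\Lambda$ gives $(E+E)\cap K=\{0\}$. A plane $V\subseteq\operatorname{supp}W_E$ only constrains $\Lambda+\Lambda$. For $E$ to represent $\mathbb Z_2^d/V^\perp$, you would instead need $W_E$ to \emph{vanish} on $V\setminus\{0\}$. Your $|E|\le 8$ argument does work, with no quarter-density input: use uniqueness of Hadamard matrices of order at most eight, the rank invariance of Lemma~\ref{lem:equivalence}, and the canonical projection.

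\textbf{Gap 2: the tiling direction.} Your justification of (2)$\Rightarrow$(1) is really a proof of (3)$\Rightarrow$(1). So deriving (2)$\Rightarrow$(3) by ``running the spectral argument on $E$'' is circular. The common-spectrum lemma (Lemma~\ref{lem:universal-small}) makes $E$ spectral only when $E$ has a tiling complement with at most eight points. Two cases remain:
\begin{itemize}
\item Tiles with at most eight points need the Cohen--Litsyn--Vardy--Z\'emor theorem (Theorem~\ref{lem:small-tiles}).
\item Sixteen-point tiles in $\mathbb Z_2^8$, whose complements also have sixteen points, need a genuinely new idea. This is the second hard case in dimension eight.
\end{itemize}
For the sixteen-point case, the paper lifts $E\oplus T_V=V$ along $H:\mathbb Z_2^{15}\to V$, $e_i\mapsto a_i$, to a perfect code $C=H^{-1}(T_V)$. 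Its translation kernel contains $\ker H$, so it has dimension at least seven. The \"Osterg\aa rd--Pottonen classification leaves $69$ such codes. Each gives a subspace complement of the image of $\{0,e_1,\dots,e_{15}\}$ in $\mathbb Z_2^{15}/M(C)$, which lifts to $E$ by Lemma~\ref{lem:projection} (Lemma~\ref{lem:sixteen-tiles}). Perfect codes belong to this tiling case, not to $16$-point spectral sets, which need no order-$32$ data at all.
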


For the remainder of the paper we work over $\mathbb Z_2$, except when discussing earlier results. Put $G=\mathbb Z_2^d$ and $N=2^d$. Vector addition and dot products are taken modulo two. For a subset $A$ of $G$, put
\[
 \begin{gathered}
 A+A=\{a+a':a,a'\in A\},\qquad W_A(u)=\sum_{a\in A}(-1)^{u\cdot a},\\
 \operatorname{supp}W_A=\{u\in G:W_A(u)\ne0\}.
 \end{gathered}
\]
Here $A+A$ is the \emph{sumset} of $A$ with itself, which is also its difference set over $\mathbb Z_2$. Thus $W_A$ is the unnormalized Fourier transform of the indicator of $A$, also called the \emph{Walsh transform} or \emph{Walsh--Hadamard transform}; the characters used here are the finite form of Walsh's orthogonal system~\cite{Walsh1923}. All sums and inner products use counting measure. We call $|A|/N$ the \emph{density} of $A$. The zero vector belongs to $\operatorname{supp}W_A$ whenever $A$ is nonempty. By a plane we mean a two-dimensional linear subspace; an affine plane will always be called an affine plane.

The two-dimensional subspace lemma, Lemma~\ref{lem:support-plane}, states that $\operatorname{supp}W_A$ contains a plane whenever $A$ is nonempty and proper and $|A|\ne N/2$. Its proof uses only the identity $1_A^2=1_A$. Applied to a spectrum of a set $E$ with $|E|=N/4$, it gives the quarter-density theorem, Theorem~\ref{thm:quarter}: there is a plane $P$ such that $E\oplus P=G$. At greater density, the same argument leaves only the familiar half-density and whole-group cases. In dimension seven this settles the spectral cardinalities 32 and 64 without a classification of the corresponding Hadamard matrices.

The dimension-eight proof still has two larger finite parts. The spectral sets with 32 points are treated using the classification of Hadamard matrices of order 32 by Kharaghani and Tayfeh-Rezaie \cite{KharaghaniTayfehRezaie2013}, in the form distributed by McKay \cite{McKayData}. The tiles with 16 points are treated using the classification of perfect codes of length 15 by \"Osterg\aa rd and Pottonen \cite{OstergardPottonen2009}. We explain why these classifications cover every set under consideration and give exact certificates for the resulting tilings. A check of a certificate and a check that all required cases have been included are separate matters; both are addressed in the appendix and accompanying programs. The completeness of the published classifications themselves is an external input.

The dimension-eight result and the quarter-density argument were first included in our broader preliminary preprint \cite{FergusonIosevich2026}. The present paper gives a standalone treatment of these results, a simpler proof in dimension seven, and an elementary form of the obstruction at eighth density. We do not include the odd-prime argument or the further dimension-nine reductions from that preliminary preprint. In particular, this paper does not assert that Fuglede's conjecture is resolved in dimension nine.

We begin in Section~\ref{sec:quarter} with the Fourier identities and the quarter-density result. Section~\ref{sec:small-sets} treats projections and small tiling partners. Section~\ref{sec:hadamard} gives the cardinality bound and the reduction to canonical point sets. Section~\ref{sec:seven} then proves the dimension-seven case. Section~\ref{sec:eight} treats the two additional finite problems needed in dimension eight and proves Theorem~\ref{thm:main}. In Section~\ref{sec:eighth} we give the eighth-density example. We conclude with further questions. The appendix records the small matrices, the finite verifications, and instructions for reproducing them.

\section{Fourier transforms and spectral sets of large size}
\label{sec:quarter}

We first give the Fourier observations used in the proofs below. Let
$G=\mathbb{Z}_2^d$ and $N=2^d$. For a function $f:G\to\mathbb{C}$,
we use the unnormalized Fourier transform, or Walsh transform,
\[
 W_f(u)=\sum_{x\in G} f(x)(-1)^{u\cdot x},\qquad u\in G.
\]
When $f=1_A$ is the indicator of a subset $A$ of $G$, we write $W_A$
instead of $W_{1_A}$. The \emph{support} of $W_A$ is the set
\[
 \operatorname{supp}W_A=\{u\in G:W_A(u)\ne0\}.
\]
In particular, $0\in\operatorname{supp}W_A$ whenever $A$ is nonempty.
Notice that $x-y=x+y$ in $G$. Thus, throughout this paper, the difference
set of $A$ is written as $A+A$.

The character orthogonality identity is
\begin{equation}
 \sum_{u\in G}(-1)^{u\cdot x}=
 \begin{cases}
 N,&x=0,\\
 0,&x\ne0.
 \end{cases}
 \label{eq:character-orthogonality}
\end{equation}
Indeed, if $x\ne0$, choose a coordinate in which $x$ is one and pair each
$u$ with the vector obtained by changing that coordinate. The two terms
cancel. It follows that Fourier inversion has the form
\begin{equation}
 f(x)=\frac1N\sum_{u\in G}W_f(u)(-1)^{u\cdot x}.
 \label{eq:fourier-inversion}
\end{equation}

Suppose that $(E,\Lambda)$ is a spectral pair, with $|E|=|\Lambda|=m$.
Its character matrix $H=((-1)^{\lambda\cdot e})_{\lambda\in\Lambda,e\in E}$
satisfies $HH^{\mathsf T}=mI$. Since $H$ is square, it is invertible,
and $H^{\mathsf T}H=mI$ as well. Hence $(\Lambda,E)$ is also a spectral
pair. Equivalently,
\begin{equation}
 \begin{split}
 (E+E)\setminus\{0\}&\subseteq\{u:W_\Lambda(u)=0\},\\
 (\Lambda+\Lambda)\setminus\{0\}&\subseteq\{u:W_E(u)=0\}.
 \end{split}
 \label{eq:spectral-zero-sets}
\end{equation}
We will use both parts of this symmetry.

For functions on $G$, define $(f*g)(x)=\sum_{y\in G}f(y)g(x+y)$,
the \emph{convolution} with counting measure. If $E\oplus T=G$, then
$1_E*1_T=1_G$. Taking Fourier transforms gives
\begin{equation}
 W_E(u)W_T(u)=0\qquad(u\ne0).
 \label{eq:tiling-zero-sets}
\end{equation}
The following lemma records two useful consequences.

\begin{lemma}\label{lem:transfer}
Let $E,T,\Lambda,B$ be nonempty subsets of $G$.
\begin{enumerate}
\item If $(E,\Lambda)$ is a spectral pair, $|E||T|=N$, and
$T+T\subseteq\operatorname{supp}W_\Lambda$, then $E\oplus T=G$.
\item If $E\oplus T=G$, $|B|=|E|$, and
$B+B\subseteq\operatorname{supp}W_T$, then $B$ is a spectrum for $E$.
\end{enumerate}
\end{lemma}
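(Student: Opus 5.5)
Both parts come from the same principle: one orthogonality condition is obtained from another through the zero-set relations \eqref{eq:spectral-zero-sets} and \eqref{eq:tiling-zero-sets}, and then counting finishes the job.

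For part (1), we show that the translates $E+t$, $t\in T$, are pairwise disjoint. Together with $|E||T|=N$, disjointness forces $E\oplus T=G$. Suppose instead that $e+t=e'+t'$ with $t\ne t'$. Then $t+t'=e+e'$ is a nonzero element of $E+E$. By the first line of \eqref{eq:spectral-zero-sets}, $W_\Lambda(e+e')=0$. On the other hand, $t+t'\in T+T\subseteq\operatorname{supp}W_\Lambda$, so $W_\Lambda(t+t')\neq 0$, which is a contradiction. Hence $(E+E)\cap(T+T)=\{0\}$, and this is exactly the disjointness of the translates.

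For part (2), we show that the characters indexed by $B$ are pairwise orthogonal on $E$. Since $|B|=|E|=\dim L^2(E)$, orthogonality makes them a basis. For distinct $b,b'\in B$, the inner product of the corresponding restricted characters is $\sum_{e\in E}(-1)^{(b+b')\cdot e}=W_E(b+b')$. Put $u=b+b'$. Then $u\neq0$ and $u\in B+B\subseteq\operatorname{supp}W_T$, so $W_T(u)\ne0$. The tiling relation \eqref{eq:tiling-zero-sets} gives $W_E(u)W_T(u)=0$, and therefore $W_E(u)=0$.

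No step here is a real obstacle. The points to handle correctly are that over $\mathbb Z_2$ differences are sums, so $t-t'=t+t'\in T+T$, and that part (1) needs the symmetric form of spectrality in \eqref{eq:spectral-zero-sets}, namely that $W_\Lambda$ vanishes on $(E+E)\setminus\{0\}$. The paper has already established this symmetric form. Both parts are then direct consequences.
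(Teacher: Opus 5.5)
Your proposal is correct and follows essentially the same route as the paper. In part (1), a coincidence $e+t=e'+t'$ with distinct summands would place a nonzero element of $E+E$ in $\operatorname{supp}W_\Lambda$, contradicting \eqref{eq:spectral-zero-sets}, and counting then gives $E\oplus T=G$. In part (2), \eqref{eq:tiling-zero-sets} forces $W_E(b+b')=0$ for distinct $b,b'\in B$, and $|B|=|E|$ makes these orthogonal characters a basis of $L^2(E)$.
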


\begin{proof}
For the first assertion, suppose that $e+t=e'+t'$ for $e,e'\in E$ and
$t,t'\in T$. If $e\ne e'$, then $e+e'=t+t'$ is a nonzero element of
$E+E$ on which $W_\Lambda$ is nonzero, contrary to
\eqref{eq:spectral-zero-sets}. Thus $e=e'$ and $t=t'$. All $|E||T|=N$
sums are distinct, so they exhaust $G$.

For the second assertion, take distinct $b,b'\in B$.
By assumption $W_T(b+b')\ne0$, and hence
\eqref{eq:tiling-zero-sets} gives $W_E(b+b')=0$.
This says precisely that the characters indexed by $b$ and $b'$ are
orthogonal on $E$. There are $|E|$ such characters, so they form an
orthogonal basis of $L^2(E)$.
\end{proof}

A common spectrum for all tiling complements of $T$ is called a
\emph{universal spectrum} of $T$, following Lagarias and Wang
\cite{LagariasWang1997}. The second assertion of Lemma~\ref{lem:transfer}
is the standard sufficient condition for a universal spectrum
\cite{LagariasSzabo2001}.

We also record the elementary projection argument used to lift tilings.

\begin{lemma}\label{lem:projection}
Let $q:G\to V$ be a surjective linear map to a vector space over
$\mathbb{Z}_2$, and suppose that $q$ is injective on $E$.
If $q(E)\oplus K=V$, then $E\oplus q^{-1}(K)=G$.
In particular, a linear-subspace tiling complement of $q(E)$ lifts to a
linear-subspace tiling complement of $E$.
Moreover, if $E\oplus K=G$ and $K$ is a linear subspace, then
\[
 K^\perp=\{u\in G:u\cdot k=0\text{ for every }k\in K\}
\]
is a spectrum for $E$. This subspace is called the \emph{annihilator}, or orthogonal complement, of $K$.
\end{lemma}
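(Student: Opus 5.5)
The plan is to prove the two assertions of Lemma~\ref{lem:projection} separately. Each should follow in a few lines, either from the definition of a tiling or from the first part of Lemma~\ref{lem:transfer}'s method.

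\emph{Lifting.} Put $T=q^{-1}(K)$, with $q(E)\oplus K=V$. Since $q$ is surjective, every fibre of $q$ has $|\ker q|=N/|V|$ elements. Hence
\[
|T|=|K|\,|\ker q|,\qquad |E||T|=|q(E)||K|\,|\ker q|=|V|\,|\ker q|=N,
\]
where we used $|E|=|q(E)|$, which holds because $q$ is injective on $E$. It therefore suffices to show that the sums $e+t$ with $e\in E$, $t\in T$ are distinct. Suppose $e+t=e'+t'$. Applying $q$ gives $q(e)+q(t)=q(e')+q(t')$, with $q(t),q(t')\in K$. Uniqueness of representations in $q(E)\oplus K$ forces $q(e)=q(e')$, and injectivity of $q$ on $E$ then gives $e=e'$, hence $t=t'$. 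So the $N$ sums are distinct and exhaust $G$. If $K$ is a linear subspace, then $q^{-1}(K)$ is a linear subspace, being the preimage of a subspace under a linear map. This gives the ``in particular'' clause.

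\emph{Annihilator spectrum.} Let $E\oplus K=G$ with $K$ a subspace, and let $\Lambda=K^\perp$. Then $|\Lambda|=N/|K|=|E|$. By the definition of spectrality, it suffices to show that $W_E(u)=0$ for every nonzero $u\in\Lambda+\Lambda$. Since $\Lambda$ is a subspace, $\Lambda+\Lambda=\Lambda$, so we must show $W_E(u)=0$ for nonzero $u\in K^\perp$.

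By \eqref{eq:tiling-zero-sets}, $W_E(u)W_K(u)=0$ for $u\ne0$. For a subspace, $W_K(u)=|K|$ when $u\in K^\perp$; this holds because every term of the sum is $1$. So $W_K(u)\neq 0$ there, and hence $W_E(u)=0$ on $K^\perp\setminus\{0\}$. Equivalently, this is Lemma~\ref{lem:transfer}(2) with $B=K^\perp$, using $B+B=K^\perp\subseteq\operatorname{supp}W_K$.

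No step is a real obstacle. The only point requiring care is the cardinality count in the lifting, which relies on surjectivity of $q$ to make every fibre have the same size.
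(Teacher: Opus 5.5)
Your proof is correct. The lifting step is essentially the paper's argument. The paper writes down the representation of each $x$ directly: the unique $q(e)\in q(E)$ with $q(x)+q(e)\in K$ gives $x=e+(x+e)$ with $x+e\in q^{-1}(K)$, so existence and uniqueness come together. You prove only uniqueness of sums and get existence from the count $|E||q^{-1}(K)|=N$. That count needs surjectivity and finiteness, which the paper's direct version avoids, but both are harmless here.

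For the annihilator your route genuinely differs. You combine the tiling identity \eqref{eq:tiling-zero-sets} with $W_K=|K|$ on $K^\perp$, so that $W_E$ vanishes on $K^\perp\setminus\{0\}$. In effect this is Lemma~\ref{lem:transfer}(2) applied with $B=K^\perp$. The paper instead argues structurally, with no Fourier transform. Since $E$ is a transversal of the cosets of $K$, and characters indexed by $K^\perp$ are constant on cosets, their orthogonality on $E$ is just the orthogonality of the characters of $G/K$.

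Your version is shorter, given the Fourier machinery the paper has already set up, and it displays $K^\perp$ as an instance of the universal-spectrum criterion. The paper's version explains why the spectrum works: the resulting character matrix is, up to ordering, the character table of $G/K\cong\mathbb Z_2^{d-\dim K}$.
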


\begin{proof}
For each $x\in G$, the unique decomposition of $q(x)$ specifies a unique
point $q(e)\in q(E)$ such that $q(x)+q(e)\in K$. Since $q$ is injective
on $E$, this also specifies a unique $e\in E$. Thus there is exactly one
representation $x=e+t$ with $e\in E$ and $t\in q^{-1}(K)$.

For the last assertion, $E$ contains exactly one representative of each
coset of $K$. The characters indexed by $K^\perp$ are constant on these
cosets. Consequently, summing their products over $E$ gives the same
orthogonality relations as summing the characters of the quotient group
$G/K$. Their number is $|G/K|=|E|$, as required.
\end{proof}

The following is a key lemma, the reason that sets of one quarter of the
ambient size are especially useful here. The two-dimensional subspace
lemma, Lemma~\ref{lem:support-plane}, concerns arbitrary indicator
functions, without any assumption of spectrality.

\begin{lemma}[two-dimensional subspace]\label{lem:support-plane}
If $\varnothing\ne A\ne G$ and $|A|\ne N/2$, then
$\operatorname{supp}W_A$ contains a two-dimensional linear subspace.
\end{lemma}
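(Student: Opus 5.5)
The approach is to exploit the idempotence $1_A^2=1_A$ on the Fourier side. Squaring a function corresponds to convolving its Walsh transforms. We then show that if $\operatorname{supp}W_A$ contained no plane, this convolution identity would collapse to a single scalar relation, which forces $|A|=N/2$ or $A\in\{\varnothing,G\}$.

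First, I would derive the identity
\[
 W_A(u)=\frac1N\sum_{v\in G}W_A(v)\,W_A(u+v)\qquad(u\in G).
\]
To get it, expand $1_A(x)^2$ using Fourier inversion \eqref{eq:fourier-inversion} for each factor. Then compute the Walsh transform of the product via the character orthogonality \eqref{eq:character-orthogonality}. This is routine.

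Put $S=\operatorname{supp}W_A$; note $0\in S$ and $W_A(0)=|A|$. Suppose, for contradiction, that $S$ contains no plane. Observe that a plane in $G$ is exactly a set $\{0,s,v,s+v\}$ with $s,v$ distinct and nonzero. So the assumption says: whenever $s,v\in S$ are distinct and nonzero, $s+v\notin S$.

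Next, fix a nonzero $s\in S$ and evaluate the identity at $u=s$. A term with index $v$ is nonzero only if both $v$ and $s+v$ lie in $S$. The choices $v=0$ and $v=s$ qualify. Any other $v$ would have $v$ nonzero, $v\neq s$, $v\in S$ and $s+v\in S$, which is excluded by the no-plane assumption. Hence
\[
 W_A(s)=\frac{2}{N}W_A(0)W_A(s)=\frac{2|A|}{N}W_A(s).
\]
Since $W_A(s)\ne0$, this gives $|A|=N/2$, contrary to hypothesis.

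Consequently $S=\{0\}$. Then Fourier inversion gives $1_A(x)=|A|/N$ for every $x$, so $1_A$ is constant. This forces $A=\varnothing$ or $A=G$, again contrary to hypothesis.

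The only delicate point is the bookkeeping in the convolution at $u=s$. One must check that the two "trivial" indices $v=0$ and $v=s$ are distinct and are the only survivors. This holds because $s\ne0$, and it is exactly the place where the no-plane assumption enters. No spectrality or further structure is needed.
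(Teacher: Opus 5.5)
Your proof is correct and follows essentially the same route as the paper's: you use the same idempotence identity $W_A(s)=N^{-1}\sum_{v}W_A(v)W_A(s+v)$, you isolate the same two terms $v=0$ and $v=s$ to produce the factor $1-2|A|/N$, and you use the non-constancy of $1_A$ to supply a nonzero point of the support. The only difference is that you argue by contradiction from a no-plane assumption, whereas the paper argues directly that some summand with index outside $\{0,s\}$ must be nonzero.
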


\begin{proof}
Put $a(u)=N^{-1}W_A(u)$ and $\delta=a(0)=|A|/N$. Thus
$0<\delta<1$, and multiplying the transform by $N^{-1}$ does not
change its support. Fourier inversion gives
\[
 1_A(x)=\sum_{u\in G}a(u)(-1)^{u\cdot x}.
\]
The key point is that an indicator takes only the values zero and one,
so $1_A(x)^2=1_A(x)$. Squaring the displayed sum gives
\[
\begin{aligned}
 1_A(x)^2
 &=\sum_{v,t\in G}a(v)a(t)(-1)^{(v+t)\cdot x}\\
 &=\sum_{s\in G}\left(\sum_{t\in G}a(t)a(s+t)\right)(-1)^{s\cdot x}.
\end{aligned}
\]
In the second equality we grouped terms with $v+t=s$, so that
$v=s+t$. All sums are finite. The uniqueness of the Fourier
coefficients, or the character orthogonality identity
\eqref{eq:character-orthogonality}, now gives
\[
 a(s)=\sum_{t\in G}a(t)a(s+t)\qquad(s\in G).
\]
For $s\ne0$, the distinct indices $t=0$ and $t=s$ contribute
$a(0)a(s)$ and $a(s)a(0)$, respectively. Removing these two terms and
using $a(0)=\delta$, we obtain
\begin{equation}
 (1-2\delta)a(s)=
 \sum_{t\in G\setminus\{0,s\}}a(t)a(s+t).
 \label{eq:indicator-convolution}
\end{equation}

Since $A$ is nonempty and proper, $1_A$ is not constant. Hence
$a(s)\ne0$ for some $s\ne0$: otherwise Fourier inversion would give
$1_A(x)=a(0)=\delta$ for every $x$. Fix such an $s$.
The assumption $|A|\ne N/2$ says that $1-2\delta\ne0$.
Thus the left side of \eqref{eq:indicator-convolution} is nonzero,
so at least one summand on the right is nonzero. For its index $t$,
we have $a(t)\ne0$, $a(s+t)\ne0$, and $t\notin\{0,s\}$.
Also $a(0)=\delta>0$. Therefore
\[
 \{0,s,t,s+t\}\subseteq\operatorname{supp}W_A.
\]
Over $\mathbb Z_2$, the span of the nonzero vector $s$ is precisely
$\{0,s\}$. Since $t$ is outside this span, $s,t$ are independent,
and the four displayed vectors form their two-dimensional span.
\end{proof}

The exception $|A|=N/2$ is necessary. For example, if $A$ is a
hyperplane, then $W_A$ is supported on the two-point subspace $A^\perp$,
which contains no plane. The exclusions $A=\varnothing$ and $A=G$ are
also necessary.

\begin{theorem}[quarter density]\label{thm:quarter}
Let $E$ be a nonempty spectral subset of $\mathbb{Z}_2^d$. Then
\[
 |E|\le 2^{d-2},\qquad |E|=2^{d-1},\qquad\text{or}\qquad
 E=\mathbb{Z}_2^d.
\]
If $|E|=2^{d-2}$, then $E$ has a two-dimensional linear-subspace tiling
complement. If $|E|=2^{d-1}$, then $E$ has a one-dimensional
linear-subspace tiling complement.
\end{theorem}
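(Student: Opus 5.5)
The plan is to apply the two-dimensional subspace lemma, Lemma~\ref{lem:support-plane}, to a spectrum $\Lambda$ of $E$ rather than to $E$ itself. Let $m=|E|=|\Lambda|$. The key constraint comes from the first line of \eqref{eq:spectral-zero-sets}: every nonzero element of $E+E$ is a zero of $W_\Lambda$. So $E$ meets each coset of any subspace $P\subseteq\operatorname{supp}W_\Lambda$ at most once. Indeed, if $e\ne e'$ lie in the same coset, then $e+e'\in P\setminus\{0\}\subseteq\operatorname{supp}W_\Lambda$, a contradiction. Hence $m\le N/|P|$.

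For the cardinality trichotomy, suppose $m\ne N/2$ and $m\ne N$. Then $\Lambda$ is nonempty, proper (since $|\Lambda|=m<N$), and not of half size. Lemma~\ref{lem:support-plane} gives a plane $P\subseteq\operatorname{supp}W_\Lambda$, and the coset bound yields $m\le N/4=2^{d-2}$. If $m=N$ then $E=G$, which gives the three alternatives. In the case $m=2^{d-2}$, the same plane $P$ satisfies $|E||P|=N$ and $P+P=P\subseteq\operatorname{supp}W_\Lambda$. So Lemma~\ref{lem:transfer}(1) with $T=P$ gives $E\oplus P=G$ directly.

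The half-density case cannot use Lemma~\ref{lem:support-plane}, since $|\Lambda|=N/2$ is exactly the excluded size. Here I would only need a single nonzero $s\in\operatorname{supp}W_\Lambda$; then $T=\{0,s\}$ is a one-dimensional subspace with $T+T=T\subseteq\operatorname{supp}W_\Lambda$ and $|E||T|=N$, and Lemma~\ref{lem:transfer}(1) finishes. Such an $s$ exists because $\Lambda$ is nonempty and proper, so $1_\Lambda$ is nonconstant. By Fourier inversion \eqref{eq:fourier-inversion}, $W_\Lambda$ cannot vanish off $0$, which is the same observation used inside the proof of Lemma~\ref{lem:support-plane}. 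This requires $d\ge1$, which is automatic when $2^{d-1}$ is a set size with $E\ne G$.

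I expect no serious obstacle. The only subtle point is applying the lemma to $\Lambda$ rather than $E$, and checking that $\Lambda$ satisfies the hypotheses of Lemma~\ref{lem:support-plane} (nonempty, proper, $|\Lambda|\ne N/2$) by using $|\Lambda|=|E|$.
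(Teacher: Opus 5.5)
Your proposal is correct and follows essentially the same route as the paper: apply Lemma~\ref{lem:support-plane} to the spectrum $\Lambda$, use $(E+E)\cap P=\{0\}$ to get $4|E|\le N$ with equality giving $E\oplus P=G$, and in the half-density case use a single nonzero $s\in\operatorname{supp}W_\Lambda$. The only difference is cosmetic: you phrase the bound via cosets and invoke Lemma~\ref{lem:transfer}(1), whereas the paper argues directly that the translates $E+k$ are pairwise disjoint.
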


\begin{proof}
Let $\Lambda$ be a spectrum for $E$. Then $|\Lambda|=|E|>0$.
We distinguish the three possibilities for $\Lambda$.

First suppose that $\Lambda\ne G$ and $|\Lambda|\ne N/2$.
The two-dimensional subspace lemma, Lemma~\ref{lem:support-plane},
applies to $\Lambda$ and gives a plane
$K\subseteq\operatorname{supp}W_\Lambda$. By the symmetry of spectral
pairs, every nonzero element of $E+E$ is a zero of $W_\Lambda$.
Consequently
\[
 (E+E)\cap K=\{0\}.
\]
We claim that the translates $E+k$, $k\in K$, are pairwise disjoint.
Indeed, an equality $e+k=e'+k'$ would give
$e+e'=k+k'\in(E+E)\cap K$. The intersection condition forces
$k+k'=0$, hence $k=k'$ and $e=e'$. Since $K$ has four elements,
\[
 4|E|=\left|\bigcup_{k\in K}(E+k)\right|\le |G|=N.
\]
This proves the quarter-density bound. If $|E|=N/4$, the disjoint
translates have total size $N$ and exhaust $G$, proving $E\oplus K=G$.

Next suppose that $|\Lambda|=N/2$. The indicator of $\Lambda$ is
nonconstant, so Fourier inversion gives an $s\ne0$ with
$W_\Lambda(s)\ne0$. By \eqref{eq:spectral-zero-sets}, this $s$ cannot
belong to $E+E$. Thus $E$ and $E+s$ are disjoint: a point in their
intersection would express $s$ as the sum of two elements of $E$.
Since $2|E|=2|\Lambda|=N$, these two translates exhaust $G$.
Therefore $E\oplus\{0,s\}=G$, with a one-dimensional subspace
complement.

Finally, if $\Lambda=G$, then $|E|=|\Lambda|=N$, so $E=G$.
These cases exhaust all possibilities and prove each assertion.
\end{proof}

In dimension seven, the quarter-density theorem, Theorem~\ref{thm:quarter}, treats spectral sets of
size $32$ without using the classification of Hadamard matrices of that
order. In dimension eight, it treats spectral sets of size $64$ without
using any classification of Hadamard matrices of order $64$.
The identity \eqref{eq:indicator-convolution} and its consequence,
the quarter-density theorem, Theorem~\ref{thm:quarter}, were given in \cite{FergusonIosevich2026}.
The proof above includes the Fourier preliminaries in order to make
clear why the argument applies in every dimension.

\section{Projection and sets of small cardinality}\label{sec:small-sets}

We first record how tiling and spectrality behave under a change of
coordinates. If $M$ is an invertible linear map, the identity
\[
 ((M^T)^{-1}\lambda)\cdot Mx=\lambda\cdot x
\]
shows that $M$ sends spectral sets to spectral sets, with the indicated
change of spectrum. Translation of a set multiplies each character by a
constant of absolute value one, and hence also preserves orthogonality.
Both operations plainly preserve tiling. Translation of $A$ also multiplies $W_A(u)$ by a sign, so its support does not change. We may therefore translate a
nonempty set to contain zero whenever this is convenient.

We will also use the elementary restriction to the span of a set. Suppose
$0\in E\subseteq V\subseteq G$, where $V$ is a linear subspace. If
$E\oplus T=G$, then $E\oplus(T\cap V)=V$: a decomposition of a point
of $V$ has its first summand in $V$, and therefore its second summand in
$V$ as well. Conversely, a tiling of $V$ extends to one of $G$ by adding
a linear complement of $V$ to the translation set. In particular, a
linear-subspace tiling complement in $V$ extends to one in $G$.

Our next lemma is due to Kadowaki
\cite[Theorem 3.1 and Lemma 3.3]{Kadowaki2026}. We give the construction
in full. Its conclusion is stronger than spectrality of a particular
tiling complement: the same frequency set works for every tiling
complement of $T$.

\begin{lemma}\label{lem:universal-small}
Let $T\subseteq\mathbb Z_2^d$ have $1$, $2$, $4$, or $8$ elements.
There is a set $B\subseteq\mathbb Z_2^d$ such that
\[
 |B|=\frac{2^d}{|T|},\qquad
 B+B\subseteq\operatorname{supp}W_T.
\]
Consequently, every tiling complement of $T$ is spectral. Also, every
spectrum of $T$, when $T$ is spectral, tiles $\mathbb Z_2^d$.
\end{lemma}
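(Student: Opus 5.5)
The plan is to build $B$ explicitly and, wherever possible, take it to be a linear subspace of the form $B=S^\perp$ for a set $S\subseteq T$ of independent vectors. Two normalizations come first. Translating $T$ multiplies $W_T$ by a sign, so I may assume $0\in T$. Let $V$ be the span of $T$. Then $W_T(u)$ depends only on the restriction of the functional $x\mapsto u\cdot x$ to $V$, so a good set $B_0$ of size $|V|/|T|$ in the dual of $V$ lifts to $B=B_0+V^\perp$. This lifted set has the right cardinality $N/|T|$, and $B+B=B_0+B_0+V^\perp$ still lies in $\operatorname{supp}W_T$. The criterion I will use throughout is that $W_T(u)=0$ exactly when precisely half of the values $u\cdot t$, $t\in T$, equal one.

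For $|T|=1$ take $B=G$. For $|T|=2$, write $T=\{0,t\}$. Then $W_T(u)=1+(-1)^{u\cdot t}$, so $B=t^\perp$ works. For $|T|=4$, write $T=\{0,a,b,c\}$; among three distinct nonzero vectors two, say $a,b$, are independent. Put $B=\{a,b\}^\perp$, a subspace of size $N/4$. Then $B+B=B$, and for $u\in B$ we get $W_T(u)=3+(-1)^{u\cdot c}\ne0$.

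For $|T|=8$ I would first try the same idea. Choose independent $a,b,c\in T$ and put $B=\{a,b,c\}^\perp$. For $u\in B$ the four terms from $0,a,b,c$ are $+1$, so $W_T(u)=0$ only if all four remaining $u\cdot t$ equal one. This failure is impossible as soon as some remaining $t\in T$ lies in $\operatorname{span}\{a,b,c\}$. So it suffices to find, after a suitable translate $T+t_0$ with $t_0\in T$, a nonzero element of $T$ that is a sum of two or three other nonzero elements of $T$, with those summands independent. If $\dim V=3$ then $T=V$ and everything is immediate.

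The main obstacle is the residual configurations in which no translate of $T$ contains such a relation, so that every admissible triple can fail. Since $\dim V\le7$, this is a finite problem. My first attempt would be a non-subspace $B_0$, for instance a union of cosets of $\{a,b,c,d\}^\perp$ inside $V^*$, with the cosets chosen so that the weight-four patterns on $T\setminus\{0,a,b,c\}$ are avoided on all pairwise sums. Failing a clean argument, I would settle these configurations by an exhaustive check up to $\mathrm{GL}$-equivalence.

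For the consequences, suppose $E\oplus T=G$. Then $|E|=N/|T|=|B|$, and Lemma~\ref{lem:transfer}(2) shows that $B$ is a spectrum for $E$. Suppose instead that $(T,\Lambda)$ is a spectral pair. By the symmetry of spectral pairs, $(\Lambda,T)$ is also a spectral pair, and $|\Lambda||B|=|T|\cdot N/|T|=N$. Since $B+B\subseteq\operatorname{supp}W_T$, Lemma~\ref{lem:transfer}(1), applied with $\Lambda$ in the role of $E$ and $T$ as its spectrum, gives $\Lambda\oplus B=G$.
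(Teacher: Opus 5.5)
Your cases $|T|\le4$ are correct; for four points, your choice $B=\{a,b\}^\perp$ even avoids the paper's case split. Your derivation of the two consequences from Lemma~\ref{lem:transfer} is also correct.

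There is a genuine gap in the case $|T|=8$, which is the heart of the lemma. Your criterion applies to some translate exactly when $T$ contains four distinct points with zero sum. It therefore says nothing about eight-point sets whose pairwise sums are all distinct. Examples are $\{0,e_1,\ldots,e_7\}$ and the set $C=\{0,e_1,\ldots,e_6,j\}\subseteq\mathbb Z_2^6$ of Proposition~\ref{prop:eighth-counterexample}. For these you give only a plan, and the plan cannot succeed as stated.

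A union of two cosets $U_0\cup(x+U_0)$ has sumset $U_0+\langle x\rangle$, which is a subspace. For $T=C$ with $d=6$, the proof of Proposition~\ref{prop:eighth-counterexample} shows that $\operatorname{supp}W_C=C+C$ contains no three-dimensional subspace. Hence neither a linear $B$ of size $8$ nor your two-coset set exists there. Likewise, every four-dimensional subspace of $\mathbb Z_2^7$ contains a vector of weight four, which rules out both constructions for $\{0,e_1,\ldots,e_7\}$. A deferred exhaustive search, without even a list of the remaining configurations, is not a proof.

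The missing idea is a uniform construction that does not try to make $B$ linear.
\begin{itemize}
\item Write $T=\{0,t_1,\ldots,t_7\}$ and $r=\dim V$, and encode $\alpha\in V^*$ by $\phi(\alpha)=(\alpha(t_1),\ldots,\alpha(t_7))\in\mathbb Z_2^7$. Then $\phi$ is injective with $r$-dimensional image $U$.
\item The character sum is $\sum_{t\in T}(-1)^{\alpha(t)}=8-2\operatorname{wt}(\phi(\alpha))$, which vanishes only at weight four.
\item Take $S=\{0,e_1,\ldots,e_7\}$ and let $j$ be the all-ones vector. The sixteen-point set $S\cup(j+S)$ has all pairwise sums of weight $0,1,2,5,6$, or $7$.
\item Since $U$ has $2^{7-r}$ cosets, one of them contains at least $2^{r-3}$ of these points. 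Translating them into $U$ preserves their pairwise sums.
\item Pulling back by $\phi$ gives $B_V\subseteq V^*$ of size $2^{r-3}$. Your own lifting step then turns $B_V$ into $B$ of size $2^{d-3}$.
\end{itemize}
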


\begin{proof}
Translation multiplies each Fourier coefficient of $T$ by a sign,
so it does not change its Fourier support. Suppose first that $|T|=8$.
Translate $T$ so that
$T=\{0,t_1,\ldots,t_7\}$, and put $V=\langle T\rangle$ and
$r=\dim V$. Since $T$ has eight points and contains zero, $3\le r\le7$. Write $V^*$ for the space of linear maps from $V$ to
$\mathbb Z_2$. The map
\[
 \phi:V^*\longrightarrow\mathbb Z_2^7,
 \qquad \phi(\alpha)=(\alpha(t_1),\ldots,\alpha(t_7))
\]
is injective, because $t_1,\ldots,t_7$ span $V$. Its image $U$ is
therefore an $r$-dimensional subspace. If $\operatorname{wt}(x)$ denotes
the \emph{Hamming weight} of $x$, the number of its nonzero coordinates
\cite{Hamming1950}, then
\[
 \sum_{t\in T}(-1)^{\alpha(t)}
       =8-2\operatorname{wt}(\phi(\alpha)).
\]
In particular, this sum vanishes precisely when
$\operatorname{wt}(\phi(\alpha))=4$.

Let $e_1,\ldots,e_7$ be the coordinate vectors of $\mathbb Z_2^7$, let
$j=(1,\ldots,1)$, and define
\[
 S=\{0,e_1,\ldots,e_7\},\qquad D=S\cup(j+S).
\]
The set $D$ has sixteen elements. A sum of two distinct elements in the
same part has weight one or two, and a sum of elements in different
parts has weight five, six, or seven. Thus no sum of two elements of
$D$ has weight four.

There are $2^{7-r}$ cosets of $U$. Some coset contains at least
$16/2^{7-r}=2^{r-3}$ elements of $D$. Choose exactly this many, say
$x_1,\ldots,x_{2^{r-3}}$, and translate them by $x_1$ into $U$.
Their pairwise sums still avoid weight four. Taking their inverse images
under $\phi$ gives a set $B_V\subseteq V^*$ of size $2^{r-3}$ with
\[
 \sum_{t\in T}(-1)^{(\alpha+\beta)(t)}\ne0
       \qquad(\alpha,\beta\in B_V).
\]
This includes $\alpha=\beta$, when the sum is eight.

The restriction map
\[
 \rho:\mathbb Z_2^d\longrightarrow V^*,\qquad
 \rho(u)(v)=u\cdot v,
\]
is surjective and each of its fibers has $2^{d-r}$ elements. Hence
$B=\rho^{-1}(B_V)$ has $2^{d-3}$ elements and has the required property.

For the smaller cardinalities we use the same restriction map. After
translation, a one-point or two-point set is its own span, and we take
$B_V=\{0\}$. A four-point set either is its own two-dimensional span,
when the same choice works, or has the form
$\{0,t_1,t_2,t_3\}$ with $t_1,t_2,t_3$ independent. In the latter case
take $B_V=\{0,\alpha\}$, where $\alpha(t_i)=1$ for $i=1,2,3$. The
only nonzero difference gives the character sum $1-3=-2$. Lifting
$B_V$ by restriction gives the asserted cardinality in every case.

If $E\oplus T=\mathbb Z_2^d$, the Fourier identity for tiling gives
$W_E(u)W_T(u)=0$ for $u\ne0$. Therefore distinct characters indexed
by $B$ are orthogonal on $E$, and $|B|=|E|$ makes $B$ a spectrum.
Finally, if $\Lambda$ is a spectrum of $T$, every nonzero element of
$\Lambda+\Lambda$ is a zero of $W_T$. Thus
$(\Lambda+\Lambda)\cap(B+B)=\{0\}$. The sums in $\Lambda+B$ are
distinct and number $2^d$, proving the last assertion.
\end{proof}

The following published small-set result will be our only input about
tilings of sets with at most eight elements.

\begin{theorem}[Cohen, Litsyn, Vardy, and Z\'emor]\label{lem:small-tiles}
If a nonempty set of at most eight elements tiles a vector space over
$\mathbb Z_2$, then it has a linear-subspace tiling complement.
\end{theorem}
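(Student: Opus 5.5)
The statement is a published result, so in the paper one would cite it. Here is how I would prove it directly. The guiding criterion is the following. Suppose $0\in E$, $|E|=m$, and $K$ is a linear subspace with $|K|=N/m$ and $(E+E)\cap K=\{0\}$. Then the translates $E+k$ are pairwise disjoint, exactly as in the proof of Theorem~\ref{thm:quarter}, so $E\oplus K=G$. Equivalently, the quotient map $G\to G/K$ is injective on $E$.

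First reduce. By translation invariance we may assume $0\in E$. By the restriction remark at the start of Section~\ref{sec:small-sets} we may work inside $V=\langle E\rangle$, of dimension $r$; a linear complement in $V$ extends to one in $G$. Since $|E|\,|T|=2^r$, the cardinality $|E|$ is $1$, $2$, $4$ or $8$. It then suffices to find a surjective linear map $q:V\to\mathbb Z_2^{k}$, with $2^k=|E|$, that is injective on $E$. Then $q(E)=\mathbb Z_2^k$, and Lemma~\ref{lem:projection} with the complement $K=\{0\}$ gives $E\oplus\ker q=V$.

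The small cases are explicit.
\begin{itemize}
\item For $|E|\le 2$, any $q$ with $q(t)\ne0$ for the nonzero point works.
\item For $|E|=4$ and $E$ a plane, take $q$ to be a projection onto $E$.
\item For $|E|=4$ with $E=\{0,a,b,c\}$ and $a,b,c$ independent, we have $r=3$. The set $E+E$ consists of $0$ and six of the seven nonzero vectors of $V$; only $a+b+c$ is missing. So $K=\{0,a+b+c\}$ works.
\end{itemize}
None of these cases uses the tiling hypothesis.

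For $|E|=8$ one has $3\le r\le 7$, and here the tiling hypothesis is genuinely needed. I would use it through \eqref{eq:tiling-zero-sets}: $W_E(u)W_T(u)=0$ for $u\ne0$. Since $|T|=2^{r-3}$ and $T$ spans at most $V$, this forces many zeros of $W_E$. The first idea is to look for a $3$-dimensional space of functionals $\alpha\in V^*$ whose nonzero elements are all balanced on $E$. If $\alpha_1,\alpha_2,\alpha_3$ span such a space, then $q=(\alpha_1,\alpha_2,\alpha_3)$ is injective on $E$. The reason is that the character sums of $E$ over the whole group $\mathbb Z_2^3$ then vanish except at $0$, so every fibre of $q$ on $E$ has exactly one point.

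To organise this, I would reuse the map $\phi:V^*\to\mathbb Z_2^7$ from Lemma~\ref{lem:universal-small}. Balanced functionals correspond to weight-four words of the $r$-dimensional code $U=\phi(V^*)$. The goal becomes: if $E$ tiles, then $U$ contains a $3$-dimensional subcode all of whose nonzero words have weight four, that is, a copy of the simplex code. The extreme cases are easy.
\begin{itemize}
\item For $r=3$, $E=V$.
\item For $r=7$, $U=\mathbb Z_2^7$ contains the simplex code.
\end{itemize}

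The main obstacle is the intermediate dimensions $r=4,5,6$. There, non-tiling sets may exist, and the tiling hypothesis must be converted into the existence of the simplex subcode. I would enumerate the $GL(V)$-orbits of $8$-point spanning sets containing $0$. These correspond to equivalence classes of binary $[7,r]$ codes up to coordinate permutation, so the list is finite and small. For each orbit, I would either exhibit a simplex subcode or show that $E$ does not tile. Non-tiling can be shown by checking that the zero set of $W_E$ cannot contain $(T+T)$-compatible Fourier support of any $T$ of size $2^{r-3}$, or by a direct exact cover check. I expect this finite case analysis to carry the real content of the theorem.
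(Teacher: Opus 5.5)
Your reductions, the criterion through $(E+E)\cap K=\{0\}$, and the cases $|E|\le 4$ are correct and agree with the paper's sketch. Your $K=\{0,a+b+c\}$ is exactly the kernel of the paper's map sending $a,b,c$ to the three nonzero points of $\mathbb Z_2^2$. The cases $r=3$ and $r=7$ for $|E|=8$ are also fine. The gap is $|E|=8$ with $4\le r\le 6$, which is the entire content of the theorem. There you only announce an orbit enumeration in which each class either contains a simplex subcode or is shown not to tile, and you carry out neither half. The paper does not prove this case itself; it rests on the citation of Cohen, Litsyn, Vardy, and Z\'emor, whose proof treats each span dimension separately. Your proposed non-tiling test asks that no $T$ of size $2^{r-3}$ have $\operatorname{supp}W_T\setminus\{0\}$ inside the zero set of $W_E$. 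That is only a restatement of non-tiling for arbitrary, possibly nonlinear $T$, which is exactly what must be proved.

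The missing analysis is short and needs no enumeration. Work in $V$, where $|T|=2^{r-3}$, and use that every tiling satisfies $(E+E)\cap(T+T)=\{0\}$.

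For $r=4$, write $T=\{t_0,t_1\}$; then $T+t_0=\{0,t_0+t_1\}$ is already a subspace complement.

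For $r=5$, translate to $T=\{0,a,b,c\}$. If $T$ is not a plane, then $K=\langle a,b\rangle\subseteq T+T$. Hence $(E+E)\cap K=\{0\}$, and your criterion gives $E\oplus K=V$.

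For $r=6$ no such general principle is available, since an eight-point $T$ can have $T+T$ free of three-dimensional subspaces (Section~\ref{sec:eighth}). Use instead that eight points of affine dimension six satisfy exactly one nontrivial affine dependency $R\subseteq E$. Here $|R|\in\{4,6,8\}$, and $|R|$ determines $E$ up to affine equivalence.
\begin{enumerate}
\item For $|R|=4$, write $E=\{0,t_1,\ldots,t_6,t_1+t_2\}$ with $t_1,\ldots,t_6$ a basis. Send $t_1,t_2$ to $e_1,e_2$ and $t_3,\ldots,t_6$ to the four vectors of $\mathbb Z_2^3$ outside $\langle e_1,e_2\rangle$.
\item For $|R|=8$, use the map $q$ from the proof of Proposition~\ref{prop:eighth-counterexample}.
\item For $|R|=6$, say $E=\{0,t_1,\ldots,t_6,t_1+t_2+t_3+t_4\}$, there is no subspace complement. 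An injective $q$ onto $\mathbb Z_2^3$ would force $q(t_5)=q(t_6)$, because the seven nonzero vectors sum to zero. So one must show that $E$ does not tile at all. The fifteen sums of distinct pairs from $R$ are distinct, since a coincidence would give a four-term affine dependency. They are nonzero elements of the four-dimensional difference space $L$ of the affine span of $R$. Hence $L\subseteq E+E$, every complement meets each coset of $L$ at most once, and $|T|\le 4<8$.
\end{enumerate}
This single non-tiling verification is the step your proposal leaves open.
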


This is established in \cite[end of Section 4]{CLVZ1996}; see also
Corollary 7.3 and Proposition 6.1 there. The ambient dimension is not
restricted. To indicate the small size of the classification involved,
translate the tile to contain zero. Its span then has dimension at most
seven, and the proof treats these possible dimensions separately. The
assertion for one, two, and four points can also be seen directly. A
four-point set is either an affine plane or, after translation, has three
independent nonzero points. In the second case, map these points to the
three nonzero points of $\mathbb Z_2^2$ and use
Lemma~\ref{lem:projection}.

\begin{corollary}\label{cor:small-spectral}
Every spectral set of cardinality $1$, $2$, $4$, or $8$ in a vector space
over $\mathbb Z_2$ has a linear-subspace tiling complement.
\end{corollary}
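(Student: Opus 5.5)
The plan is to pass from the spectral set $E$ to its spectrum, use Lemma~\ref{lem:universal-small} to get a tiling, and then apply Theorem~\ref{lem:small-tiles} to that tiling. Let $E$ be spectral of cardinality $m\in\{1,2,4,8\}$ in $G=\mathbb Z_2^d$, and let $\Lambda$ be a spectrum. By the symmetry of spectral pairs recorded in \eqref{eq:spectral-zero-sets}, $(\Lambda,E)$ is also a spectral pair. Thus $E$ is a spectrum of the spectral set $\Lambda$, and $|\Lambda|=m$.

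Apply Lemma~\ref{lem:universal-small} with $T=\Lambda$. Its last assertion says that every spectrum of $\Lambda$ tiles $\mathbb Z_2^d$. In particular, $E$ is a tile of $G$.

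Since $E$ is a nonempty tile with at most eight elements, Theorem~\ref{lem:small-tiles} gives a linear-subspace tiling complement for $E$, which is the claim.

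There is no real obstacle, since all the substance lies in Lemma~\ref{lem:universal-small} and in the cited theorem of Cohen, Litsyn, Vardy, and Z\'emor. The one point to check is the direction of the application: the lemma must be applied to $\Lambda$, not to $E$. The lemma turns spectra of small sets into tiles, and $E$ is a spectrum of the small set $\Lambda$. That is exactly why the symmetry $H^{\mathsf T}H=mI$ is needed. The ambient space may be any vector space over $\mathbb Z_2$, which we identify with $\mathbb Z_2^d$ for some $d$.
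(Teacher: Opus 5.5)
Your proposal is correct and is essentially the paper's own argument. Like the paper, you use the symmetry of spectral pairs to make $E$ a spectrum of $\Lambda$, apply the last assertion of Lemma~\ref{lem:universal-small} with $T=\Lambda$ to conclude that $E$ tiles, and then invoke Theorem~\ref{lem:small-tiles}.
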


\begin{proof}
If $\Lambda$ is a spectrum of $E$, then $E$ is a spectrum of $\Lambda$.
Apply the last assertion of Lemma~\ref{lem:universal-small} with
$T=\Lambda$ to show that $E$ tiles, and then apply
Theorem~\ref{lem:small-tiles}.
\end{proof}

\section{Hadamard matrices and spectral cardinalities}\label{sec:hadamard}

We recall the connection with the matrices used by Ferguson and
Sothanaphan~\cite{FergusonSothanaphan2020}. A real \emph{Hadamard matrix} \cite{Hadamard1893}
of order $m$ is an $m\times m$ matrix $H$ with entries $1$ and $-1$
satisfying $HH^T=mI$. A matrix $L$ over $\mathbb Z_2$ is
\emph{log-Hadamard} if $H_{ij}=(-1)^{L_{ij}}$ is Hadamard. Equivalently,
the difference of two distinct rows of $L$ is \emph{equidistributed},
or \emph{balanced}: it contains equally many zeros and ones. We call $L$ \emph{dephased} if its first row and first column
are zero. Thus dephasing $H$ gives its usual normalization: its first
row and first column are equal to one. In this section all ranks of log-Hadamard matrices are taken
over $\mathbb Z_2$.

\subsection{An integer determinant bound}\label{subsec:cardinalities}

For $0\le j\le d$, form a list in which the integer $j$ occurs
$\binom dj$ times, in increasing order. Let $s_d(m)$ be the sum of the
first $m$ entries of this list. For a positive integer $m$, let
$\nu_2(m)$ denote the largest integer $a$ such that $2^a$ divides $m$.
Kadowaki's bound is as follows.

\begin{lemma}[Kadowaki]\label{lem:smith}
If $E\subseteq\mathbb Z_2^d$ is spectral and $m=|E|\ge2$, then
\begin{equation}\label{eq:smith-bound}
 \frac m2\nu_2(m)\ge s_d(m).
\end{equation}
\end{lemma}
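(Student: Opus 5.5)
The plan is to compare two computations of the $2$-adic valuation of $\det H$, where $H=((-1)^{\lambda\cdot e})_{\lambda\in\Lambda,e\in E}$ is the character matrix of a spectral pair $(E,\Lambda)$ with $|E|=|\Lambda|=m$. The first computation is exact. Since $HH^{\mathsf T}=mI$, we have $(\det H)^2=m^m$, so $|\det H|=m^{m/2}$ and
\[
 \nu_2(\det H)=\tfrac m2\,\nu_2(m).
\]
The lemma will follow once I show that $2^{s_d(m)}$ divides the integer $\det H$. (The hypothesis $m\ge2$ only ensures the statement is nontrivial.)

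For the lower bound I will factor $H$ through the monomial basis of Boolean functions. For $x\in\{0,1\}$ we have $(-1)^{x}=1-2x$. Regarding coordinates as integers $0,1$ and expanding the product, we get
\[
 (-1)^{\lambda\cdot e}=\prod_{k=1}^d(1-2\lambda_ke_k)=\sum_{S\subseteq\{1,\dots,d\}}(-2)^{|S|}\lambda^S e^S,
\]
where $x^S=\prod_{k\in S}x_k\in\{0,1\}$. Let $\Phi_E$ be the $m\times2^d$ integer matrix with entries $e^S$, let $\Phi_\Lambda$ be the analogous matrix for $\Lambda$, and let $D=\operatorname{diag}((-2)^{|S|})$. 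Then
\[
 H=\Phi_\Lambda D\Phi_E^{\mathsf T}.
\]

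Next I apply the Cauchy--Binet formula. It expresses $\det H$ as a sum over $m$-element families $\mathcal S$ of subsets $S$, each term being
\[
 \det\Phi_\Lambda[\,\cdot,\mathcal S]\;\det\Phi_E[\,\cdot,\mathcal S]\;\prod_{S\in\mathcal S}(-2)^{|S|}.
\]
Each determinant here is an integer, so each term is divisible by $2^{\sum_{S\in\mathcal S}|S|}$. Any $m$ distinct subsets of $\{1,\dots,d\}$ have total size at least the sum of the $m$ smallest entries of the list in which $j$ occurs $\binom dj$ times. That sum is exactly $s_d(m)$. Hence $2^{s_d(m)}\mid\det H$, and so
\[
 \tfrac m2\nu_2(m)=\nu_2(\det H)\ge s_d(m),
\]
using that $\det H\ne0$.

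The only step needing care is the factorization and the Cauchy--Binet bookkeeping. One must check that the identity $(-1)^{\lambda\cdot e}=\prod_k(1-2\lambda_ke_k)$ holds over $\mathbb Z$ with the mod-two dot product; it does, since each factor is $\pm1$ according to $\lambda_ke_k$. One must also check that the minimal total weight over $m$ distinct subsets is attained by taking subsets in order of size. The rest is immediate. Translating $E$ or $\Lambda$ is unnecessary, since the argument never uses dephasing.
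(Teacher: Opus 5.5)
Your proof is correct and is essentially the paper's argument. Your expansion $(-1)^{\lambda\cdot e}=\sum_S(-2)^{|S|}\lambda^S e^S$ is the $d$-fold tensor power of the $2\times2$ integer factorization of $S_1$ that the paper uses, so $S_d=\Phi D\Phi^{\mathsf T}$ with diagonal entries $\pm2^{|S|}$. From there you apply the same Cauchy--Binet divisibility count and compute $\nu_2(\det H)$ from $(\det H)^2=m^m$, exactly as the paper does; as in the paper, only the integrality of the outer factors is needed.
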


\begin{proof}
Order the elements of $\mathbb Z_2^d$ by their coordinates and let
\[
 S_d=\bigl((-1)^{x\cdot y}\bigr)_{x,y\in\mathbb Z_2^d}.
\]
This is the \emph{Sylvester Hadamard matrix}, obtained by Sylvester's
tensor-product construction~\cite{Sylvester1867}: it is the $d$-fold tensor (Kronecker) product of
$S_1=\left(\begin{smallmatrix}1&1\\1&-1\end{smallmatrix}\right)$.
The explicit integer operations
\[
 \begin{pmatrix}1&0\\1&-1\end{pmatrix}
 S_1
 \begin{pmatrix}1&-1\\0&1\end{pmatrix}
 =\begin{pmatrix}1&0\\0&2\end{pmatrix}
\]
have integer inverses. Taking their $d$-fold tensor products expresses
$S_d=ADB$, where $A,B$ are invertible integer matrices with integer
inverses, and $D$ is diagonal with $2^j$ occurring $\binom dj$ times.
This is the \emph{Smith normal form} of $S_d$, introduced by Smith
\cite{Smith1861}; the displayed factorization is all that we need.

Apply the Cauchy--Binet formula to any $m\times m$ minor of $ADB$.
Each summand is an integer times a product of $m$ distinct diagonal
entries of $D$. Every such product is divisible by $2^{s_d(m)}$.
Consequently every $m\times m$ minor of $S_d$ is divisible by this
power of two.

If $\Lambda$ is a spectrum of $E$, the submatrix
$H=((-1)^{\lambda\cdot e})_{\lambda\in\Lambda,e\in E}$ satisfies
$HH^T=mI$. Taking determinants gives $(\det H)^2=m^m$, and hence
\[
 \nu_2(|\det H|)=\frac m2\nu_2(m).
\]
The divisibility just proved gives \eqref{eq:smith-bound}. This is the
argument of \cite[Section 4]{Kadowaki2026}, with the integer
factorization written explicitly.
\end{proof}

\begin{corollary}\label{cor:cardinality-eight}
Every spectral subset of $\mathbb Z_2^7$ or $\mathbb Z_2^8$ has
cardinality a power of two.
\end{corollary}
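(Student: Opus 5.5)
The plan is a direct finite check of inequality \eqref{eq:smith-bound} from Lemma~\ref{lem:smith} for every $m$ that is not a power of two. Cardinality $m=1$ is a power of two, so assume $m\ge2$. We also have $m\le 2^d$. By Theorem~\ref{thm:quarter}, any spectral $m$ satisfies $m\le 2^{d-2}$, $m=2^{d-1}$, or $m=2^d$. So it suffices to rule out the non-powers of two with $3\le m\le 2^{d-2}$, that is, $m\le 32$ for $d=7$ and $m\le 64$ for $d=8$.

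The first step is to make $s_d(m)$ explicit. The list has one $0$, then $d$ entries equal to $1$, then $\binom d2$ entries equal to $2$, and so on. For $d=7$ these blocks have sizes $1,7,21,35$, with cumulative sizes $1,8,29,64$. Hence
\begin{itemize}
\item $s_7(m)=m-1$ for $m\le8$;
\item $s_7(m)=7+2(m-8)$ for $8\le m\le29$;
\item $s_7(m)=49+3(m-29)$ for $29\le m\le 64$.
\end{itemize}
For $d=8$ the blocks have sizes $1,8,28,56$, with cumulative sizes $1,9,37,93$. Hence
\begin{itemize}
\item $s_8(m)=m-1$ for $m\le 9$;
\item $s_8(m)=8+2(m-9)$ for $9\le m\le 37$;
\item $s_8(m)=64+3(m-37)$ for $37\le m\le 93$.
\end{itemize}
The left side $\tfrac m2\nu_2(m)$ equals $0$ for odd $m$, so odd $m\ge3$ is excluded at once, because $s_d(m)\ge m-1>0$.

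For even $m$ that are not powers of two, write $m=2^a k$ with $k\ge3$ odd. Then the left side is $ma/2$. A crude estimate handles most cases. For $m\le 2^{d-2}$ we have $2^a\le m/3$, so $a\le \log_2(m/3)$. This gives $a\le3$ when $m\le32$, and $a\le4$ when $m\le 64$.

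The main cases then go as follows.
\begin{itemize}
\item If $a\le2$, the left side is at most $m$. For $d=7$: $s_7(m)\ge m-1$ with equality only for $m\le 8$, so we check $m=6$ directly ($6>5$). Beyond $m=8$, $s_7(m)\ge 2m-9>m$ once $m\ge10$.
\item If $a=3$, we have $m\in\{24,40,56\}$ and the left side is $3m/2$. For $m=24$ we compare $36$ with $s_7(24)=39$ and $s_8(24)=38$.
\item If $a=4$, we have $m=48$ (only for $d=8$). The left side is $96$, while $s_8(48)=97$.
\end{itemize}
The remaining values, $m=40$ and $m=56$ for $d=8$, give left sides $60$ and $84$ against $s_8(40)=73$ and $s_8(56)=121$. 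The same pattern applies to $d=8$ with the cutoffs $9$ and $37$ in the $a\le2$ case.

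The only real obstacle is bookkeeping near the block boundaries, where the inequality is tightest. The tightest cases are $m=24$ and $m=48$, whose margins are $2$ to $3$ and $1$ respectively. To be safe I would tabulate every even non-power-of-two $m$ up to $64$: $6,10,12,14,\dots$. For each such $m$, I would list $\tfrac m2\nu_2(m)$ beside $s_d(m)$ and confirm the strict failure of \eqref{eq:smith-bound}. This tabulation doubles as the appendix-style certificate.
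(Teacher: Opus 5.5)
Your argument is correct, and it differs from the paper's in one respect. The paper first removes odd $m$ and $m\equiv2\pmod4$ using the elementary fact that a Hadamard matrix has order $1$, $2$, or a multiple of four. After the same quarter-density cutoff, its table therefore lists only multiples of four. You instead let Lemma~\ref{lem:smith} do all the work. When $\nu_2(m)\le1$, the left side of \eqref{eq:smith-bound} is at most $m/2$, while $s_d(m)\ge m-1>m/2$ for $m\ge3$. This shows that the determinant bound already implies the order-divisibility fact for spectral sets in every dimension. Your proof thus uses one ingredient fewer, at the cost of a few extra trivial cases. Your piecewise formulas for $s_7$ and $s_8$ also make the tight comparisons transparent: $36<38,39$ at $m=24$ and $96<97$ at $m=48$.

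Two small slips in your $a\le2$ bullet should be fixed.
\begin{itemize}
\item For $m=6$ the direct check is $3<5=s_7(6)$, not ``$6>5$''.
\item For $d=8$, the crude estimate $s_8(m)\ge2m-10>m$ needs $m\ge11$. At $m=10$ it gives only the tie $10$ versus $s_8(10)=10$.
\end{itemize}
Both are handled at once if you treat $a\le1$ separately by the observation $m/2<m-1$, and use the crude estimate only for $a=2$, where $m\ge12$.
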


\begin{proof}
A Hadamard matrix has order one, two, or a multiple of four. Indeed,
normalize its first row to consist of ones. If there is a second row,
orthogonality gives equally many signs of each kind. If there is a
third row, orthogonality to the first two rows says that within each
half selected by the second row the third row has equally many signs
of each kind. The order is therefore divisible by four.

In dimension eight, the quarter-density theorem, Theorem~\ref{thm:quarter}, reduces the cardinalities
to those at most $64$, together with $128$ and $256$. For the multiples
of four at most $64$ which are not powers of two, the two sides of
\eqref{eq:smith-bound} are
\[
\begin{array}{c|rrrrrrrrrrr}
m&12&20&24&28&36&40&44&48&52&56&60\\ \hline
\frac m2\nu_2(m)&12&20&36&28&36&60&44&96&52&84&60\\
s_8(m)&14&30&38&46&62&73&85&97&109&121&133.
\end{array}
\]
Every column contradicts the bound. The entries are obtained by adding
the list consisting of one zero, eight ones, twenty-eight twos, and
fifty-six threes; no matrix classification is involved.

In dimension seven, the quarter-density theorem, Theorem~\ref{thm:quarter}, leaves only the four
non-power-of-two possibilities $12,20,24,28$. Here the corresponding
values of $s_7(m)$ are $15,31,39,47$, which are again larger than
$12,20,36,28$. This proves both assertions.
\end{proof}

\subsection{Canonical point sets}\label{subsec:canonical}

Let $L$ be a dephased log-Hadamard matrix of rank $r$. Choose a matrix
$R$ whose rows are a basis of the row space of $L$. We call the set of
columns of $R$ a \emph{canonical column set} of $L$. The word canonical
refers to the set up to an invertible linear map: changing the chosen
row basis applies the same invertible map to every column. These
columns are distinct. Otherwise two columns of $L$ would be equal,
contrary to Hadamard orthogonality.

\begin{lemma}[Kadowaki]\label{lem:canonical}
Let $(E,\Lambda)$ be a spectral pair in $\mathbb Z_2^d$, with both
sets translated to contain zero. Put
$L=(\lambda\cdot e)_{\lambda\in\Lambda,e\in E}$ and $r=\operatorname{rank}L$.
Then $r\le d$, and there is a surjective linear map
$q:\mathbb Z_2^d\to\mathbb Z_2^r$ which maps $E$ bijectively onto a
canonical column set of $L$.
\end{lemma}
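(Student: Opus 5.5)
The plan is to factor the bilinear pairing through the row space of $L$. Since $0\in\Lambda$ and $0\in E$, the first row and column of $L$ are zero, so $L$ is a dephased log-Hadamard matrix. This holds because the character matrix $H=(-1)^{L}$ satisfies $HH^{\mathsf T}=mI$.

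Write $\Lambda=\{\lambda_1,\ldots,\lambda_m\}$ and $E=\{e_1,\ldots,e_m\}$. Let $M$ be the $m\times d$ matrix with rows $\lambda_i$, and $X$ the $d\times m$ matrix with columns $e_j$. Then $L=MX$ over $\mathbb Z_2$, so $r=\operatorname{rank}L\le\operatorname{rank}M\le d$, which proves the first assertion.

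For the map, I will work with the span $\langle\Lambda\rangle$. The row space of $L$ is the image of $\langle\Lambda\rangle$ under the linear map
\[
\psi:\mathbb Z_2^d\to\mathbb Z_2^m,\qquad \psi(\mu)=(\mu\cdot e_1,\ldots,\mu\cdot e_m).
\]
Indeed, $\psi(\lambda_i)$ is the $i$th row of $L$. Choose $\mu_1,\ldots,\mu_r\in\langle\Lambda\rangle$ whose images $\psi(\mu_k)$ form a basis of the row space; these can be taken among the $\lambda_i$ themselves. Let $R$ be the $r\times m$ matrix with rows $\psi(\mu_k)$. This is a legitimate choice of $R$, and by the remark about change of basis any other choice gives the same column set up to an invertible linear map. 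Define
\[
q:\mathbb Z_2^d\to\mathbb Z_2^r,\qquad q(x)=(\mu_1\cdot x,\ldots,\mu_r\cdot x).
\]
By construction $q(e_j)$ is the $j$th column of $R$, so $q$ maps $E$ onto the canonical column set.

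The map $q$ is injective on $E$ because the columns of $R$ are distinct, as noted before the lemma. Concretely, two equal columns of $R$ would give two equal columns of $L$, since every row of $L$ is a combination of rows of $R$. Two equal columns of $L$ would give equal columns of $H$, contradicting $H^{\mathsf T}H=mI$, which holds because $H$ is square.

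It remains to check that $q$ is surjective. The functionals $x\mapsto\mu_k\cdot x$ are linearly independent on $\mathbb Z_2^d$: a dependence among them would restrict to a dependence among the rows $\psi(\mu_k)$ of $R$, which are a basis. Hence the vectors $\mu_k$ are independent, and $q$ has rank $r$.

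The only delicate point, and the one I expect to need the most care, is the consistency just used. The canonical set is defined through a row basis of $L$, while $q$ is defined through vectors in $\mathbb Z_2^d$. The two match because the rows of $L$ are exactly the images of the $\lambda_i$ under $\psi$. Everything else is routine linear algebra.
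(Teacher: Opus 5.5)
Your proof is correct and matches the paper's argument step for step. You use the factorization $L=MX$ to get $r\le d$, and you define $q$ from spectrum elements whose rows of $L$ form a row-space basis. Surjectivity comes from the independence of those vectors, and injectivity on $E$ from the distinctness of the columns of $R$. Your extra detail on why equal columns of $R$ would force equal columns of $L$ and $H$ simply spells out the remark the paper makes just before the lemma.
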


\begin{proof}
The matrix $L$ is a product of a matrix with $d$ columns and a matrix
with $d$ rows, so its rank is at most $d$. Choose
$\lambda_1,\ldots,\lambda_r$ whose rows of $L$ form a row-space basis,
and define
\[
 q(x)=(\lambda_1\cdot x,\ldots,\lambda_r\cdot x).
\]
The $\lambda_i$ are linearly independent: a relation between these
vectors would give the same relation between their rows of $L$.
Thus $q$ is surjective. Its values on $E$ are exactly the distinct
columns of the selected row-basis matrix.
\end{proof}

Two Hadamard matrices are called \emph{equivalent} if one is obtained
from the other by permuting rows and columns and multiplying rows and
columns by $-1$. Transposition is not included in this definition.

\begin{lemma}\label{lem:equivalence}
Equivalent Hadamard matrices have affinely equivalent canonical column
sets after dephasing. In particular, their dephased log-Hadamard
matrices have the same rank. Transposition exchanges the roles of the
canonical column and row sets.
\end{lemma}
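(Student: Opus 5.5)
The plan is to follow how each elementary equivalence operation acts on the log-Hadamard matrix $L$, with $H_{ij}=(-1)^{L_{ij}}$. First I would check row and column permutations. Permuting rows of $L$ does not change its row space, so a row-basis matrix $R$ can be taken with the same rows and the column set is unchanged. Permuting columns permutes the columns of $R$, so the set of columns is again unchanged.

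Next come the sign changes. Multiplying row $i$ of $H$ by $-1$ adds the all-ones vector $j$ to row $i$ of $L$. Multiplying column $k$ by $-1$ adds $j^{\mathsf T}$ to column $k$ of $L$. After such changes the matrix is no longer dephased, so I would compare canonical column sets only after re-dephasing.

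The key observation concerns dephasing itself. For an arbitrary log-Hadamard matrix $L$, put
\[
 L'_{ik}=L_{ik}+L_{i1}+L_{1k}+L_{11}.
\]
This is the unique dephased matrix obtained from $L$ by the sign operations. Write the columns of $L$ as $c_k\in\mathbb Z_2^m$. Then the columns of $L'$ are
\[
 c'_k=c_k+c_1+(L_{1k}+L_{11})\,j.
\]
I would show that $c'_k$ is obtained from $c_k$ by an affine map. Let $\pi$ be the linear projection of $\mathbb Z_2^m$ that kills the direction $j$ and is determined by the first coordinate, namely $\pi(x)=x+x_1 j$. Then $\pi(c_k+c_1)=c_k+c_1+(L_{1k}+L_{11})j=c'_k$. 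So the column multiset of $L'$ is the image of the columns of $L$ under the affine map $x\mapsto\pi(x+c_1)$.

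Two equivalent matrices $H_1,H_2$ differ by permutations and signs. Their dephasings $L'_1$ and $L'_2$ are therefore related as follows. After suitable row and column permutations, the columns of both are images under affine maps of one common column configuration $\{c_k\}$, with possibly different base column $c_1$ and possibly a different first row. This is because the sign changes of $H_1$ versus $H_2$ are absorbed by the dephasing formula. The column set of a dephased $L'$ spans a subspace $C$ of $\mathbb Z_2^m$ whose dimension is $r$. A canonical column set is the image of these columns under an isomorphism $C\to\mathbb Z_2^r$, given by coordinates with respect to a row basis.

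The main obstacle is to show that the two affine images correspond under an affine isomorphism between their spans, not just under a non-injective map. I would handle this by proving that the linear part of $x\mapsto\pi(x+c_1)$ is injective on the span of the differences $c_k+c_l$. If some nonzero combination of differences lay in the kernel $\{0,j\}$, then $j$ would be a sum of an even number of columns. I would then rule out, or harmlessly absorb, this case by noting that $L'$ already has first row zero. That forces the span of the differences of columns of $L'$ to exclude $j$, and the two dephasings then give mutually inverse affine maps between the two point sets. This step is the one I expect to need the most care.

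Equality of ranks follows at once, since affinely equivalent point sets containing $0$ span subspaces of equal dimension, and that dimension is $r$.

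Finally, the transposition statement is formal. $H^{\mathsf T}$ has log-matrix $L^{\mathsf T}$, whose columns are the rows of $L$, and $\operatorname{rank}L^{\mathsf T}=\operatorname{rank}L$. So the canonical column set of $L^{\mathsf T}$ is by definition the canonical row set of $L$, and conversely.
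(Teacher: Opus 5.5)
Your proposal is correct and is essentially the paper's argument: the paper uses the same re-dephasing formula, with sign changes absorbed, but encodes it through a rank factorization $L=PR$ with $p_0=c_0=0$, so that $L'_{ij}=(p_i+p_{i_0})\cdot(c_j+c_{j_0})$; its observation that the translated rows still span $\mathbb Z_2^r$ is the dual of your fact that the zero row keeps $j$ out of the column space, which makes $\pi$ injective there. The one point to state cleanly, which you already gesture at, is that the map must have a dephased source: re-dephase $L'_1$ directly to obtain $L'_2$ (legitimate, since the formula is invariant under row and column sign changes), rather than starting from an undephased common $L$, whose column differences can span $j$.
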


\begin{proof}
Row and column sign changes cancel when the matrices are dephased.
Apart from relabeling, a permutation only changes the row and column
chosen for dephasing. If these are $i_0,j_0$, the new phase matrix has
entries
\[
 L'_{ij}=L_{ij}+L_{ij_0}+L_{i_0j}+L_{i_0j_0}.
\]
Take a rank factorization $L=PR$, with both factors of rank $r$, and
write $p_i$ for the rows of $P$ and $c_j$ for the columns of $R$.
Because $L$ is dephased and both factors have full rank, $p_0=c_0=0$.
Consequently
\[
 L'_{ij}=(p_i+p_{i_0})\cdot(c_j+c_{j_0}).
\]
The translated row family still spans $\mathbb Z_2^r$: it contains
$p_{i_0}$, coming from $i=0$, and hence contains every original row
in its span. The same applies to the translated column family. The
new factorization therefore still has rank $r$. Selecting a basis of
its row space changes the translated columns by an invertible linear
map. This proves the asserted affine equivalence. Transposing $L=PR$
exchanges the two families, proving the last assertion.
\end{proof}

The rank assertion also follows from the dephasing theorem in
\cite[Proposition 4.3]{FergusonSothanaphan2020}. The stronger statement
about the point sets is \cite[Lemma 5.3]{Kadowaki2026}; the proof above
also explains why a classification taken up to transposition requires
both orientations to be checked.

\begin{lemma}\label{lem:small-spectral}
Every spectral set of cardinality $1$, $2$, $4$, $8$, or $16$ in a vector
space over $\mathbb Z_2$ has a linear-subspace tiling complement.
\end{lemma}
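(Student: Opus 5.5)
The cases of cardinality $1$, $2$, $4$, and $8$ are already Corollary~\ref{cor:small-spectral}, so only the case $|E|=16$ needs an argument. For it I would reduce to a finite list of point sets via Lemma~\ref{lem:canonical}, and then handle those sets using the five equivalence classes of Hadamard matrices of order $16$.

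Let $(E,\Lambda)$ be a spectral pair with $|E|=16$, both sets translated to contain zero. Put $L=(\lambda\cdot e)$, a dephased log-Hadamard matrix of order $16$, and let $r$ be its rank. By Lemma~\ref{lem:canonical} there is a surjective linear map $q:\mathbb Z_2^d\to\mathbb Z_2^r$ that maps $E$ bijectively onto a canonical column set $C$ of $L$. By Lemma~\ref{lem:projection}, a linear-subspace tiling complement of $C$ in $\mathbb Z_2^r$ lifts to one of $E$.

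It therefore suffices to show the following: for each Hadamard matrix $H$ of order $16$, dephased in any way, the canonical column set has a linear-subspace tiling complement in $\mathbb Z_2^r$. By Lemma~\ref{lem:equivalence}, canonical column sets of equivalent matrices are affinely equivalent. Affine maps preserve the existence of a subspace complement: translation leaves a complement unchanged, and an invertible linear map carries a subspace complement to a subspace complement. So one representative per equivalence class suffices. Transposition is not part of the equivalence, so I would take representatives of all five classes without merging transposes. Since $L$ arises as the matrix $(\lambda\cdot e)$ in its given orientation, the column set is what matters, and listing all five classes covers every case.

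For each of the five classes, I would compute the rank $r$ (it satisfies $4\le r\le 8$, since the rank is at least $\log_2 16$ for distinct columns) and exhibit an explicit subspace $K\le\mathbb Z_2^r$ of dimension $r-4$ with $(C+C)\cap K=\{0\}$. Since $|C||K|=2^r$, this gives $C\oplus K=\mathbb Z_2^r$.

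For the Sylvester class, $r=4$ and $C=\mathbb Z_2^4$, so $K=\{0\}$ works. For the others, a quicker route where it applies is Theorem~\ref{thm:quarter}: if $r\le 6$, then $C$ is spectral in $\mathbb Z_2^r$ with density $\ge 1/4$, because the columns of $R$ together with the rows of the basis give a spectral pair in $\mathbb Z_2^r$. The theorem then yields a subspace complement automatically when $r=5$ or $r=6$. So only classes with $r=7$ or $r=8$ require explicit certificates $K$.

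The main obstacle is this last step: determining the ranks of the five classes, and, for any class of rank $7$ or $8$, finding a $3$- or $4$-dimensional $K$ avoiding $C+C$. This is a small finite search that I would record as an exact certificate in the appendix. I would first check whether all five classes have rank at most $6$, in which case Theorem~\ref{thm:quarter} finishes the proof with no search at all.
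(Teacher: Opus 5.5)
Your reduction is correct and is the paper's. Lemmas~\ref{lem:canonical} and~\ref{lem:projection} reduce the case $|E|=16$ to canonical column sets, and Lemma~\ref{lem:equivalence} reduces these to one representative of each of Hall's five ordinary classes. Checking the \emph{column} sets of all five representatives directly is legitimate and avoids a transposition step. The paper instead certifies the \emph{row} point sets, then notes that the transpose of any order-$16$ Hadamard matrix lies in one of the five ordinary classes. Using Theorem~\ref{thm:quarter} for $r=5,6$ is a valid economy that the paper does not make. One correction there: the spectrum of $C$ in $\mathbb Z_2^r$ is the set of rows $p_i$ of $P$ in a rank factorization $L=PR$, since $L_{ij}=p_i\cdot c_j$; ``the rows of the basis'' is not the right description.

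The gap is that you stop exactly where the content of the lemma lies. The dephased ranks of the five classes are $4,5,6,7,7$, as recorded in Appendix~\ref{app:order16}, so your hope that every rank is at most $6$ fails. Two classes give $16$-point sets of density $1/8$ in $\mathbb Z_2^7$. There Theorem~\ref{thm:quarter} says nothing, and Section~\ref{sec:eighth} shows that its Fourier-support argument does not extend to density $1/8$. For these two sets you must actually exhibit a $3$-dimensional $K$ with $(C+C)\cap K=\{0\}$; until you do, the case $|E|=16$ is unproved.

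The paper supplies this in Table~\ref{tab:order16-patterns}. For each representative it gives five columns $c_0,\ldots,c_4$ whose sixteen relative sign patterns \eqref{eq:order16-pattern} are distinct. The functionals $a_{c_j}+a_{c_0}$ then map the row point set bijectively onto $\mathbb Z_2^4$, and the kernel is the complement. These certificates are for row point sets, so your column-oriented version would need the transposition observation added, or you must compute column certificates yourself.

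Finally, your bound $r\le 8$ is unjustified but harmless. In fact $r\le 7$: the nonzero rows of $L$ have weight $8$ and pairwise overlaps $4$, so the row space is self-orthogonal inside the fifteen nonzero coordinates. Hence the case $r=8$ never arises.
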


\begin{proof}
Cardinalities at most eight follow from
Corollary~\ref{cor:small-spectral}. For cardinality sixteen, use Hall's
classification~\cite{Hall1961} of the five ordinary equivalence classes
of Hadamard matrices of order sixteen. We give all five representatives
in Appendix~\ref{app:order16}, together with five column indices
$c_0,c_1,c_2,c_3,c_4$ for each representative $H$. In each case the
sixteen relative sign patterns
\begin{equation}\label{eq:order16-pattern}
 \bigl(H_{i,c_1}H_{i,c_0},H_{i,c_2}H_{i,c_0},
       H_{i,c_3}H_{i,c_0},H_{i,c_4}H_{i,c_0}\bigr),
       \qquad 0\le i<16,
\end{equation}
are all distinct. The appendix displays these patterns, so this finite
verification can be made directly from the printed matrices.

The printed representatives have their first row and first column equal to one, so their phase matrices are dephased. To see what the verification means, represent the rows as points $x_i$
and the columns as linear functionals $a_j$, so that
$H_{ij}=(-1)^{a_j(x_i)}$. The four sign coordinates in
\eqref{eq:order16-pattern} are the characters associated with
$a_{c_j}+a_{c_0}$, for $1\le j\le4$. Distinctness says that their four
values give a linear map which is bijective from the row point set
onto $\mathbb Z_2^4$. Its kernel is a tiling complement by
Lemma~\ref{lem:projection}.

Lemma~\ref{lem:equivalence} transfers this property to the canonical
row point set of every Hadamard matrix of order sixteen. Applied to
the transpose, it gives the same property for every canonical column
set. There is no additional transposition assumption here: Hall's five
classes are ordinary equivalence classes, so the transpose belongs to
one of them. Finally, Lemmas~\ref{lem:canonical} and
\ref{lem:projection} lift a subspace complement of the canonical
column set to one of the original set. This is the subspace form of
the order-sixteen argument in \cite[Proposition 6.1]{Kadowaki2026}.
\end{proof}

\section{A shorter proof in dimension seven}\label{sec:seven}

Kadowaki~\cite{Kadowaki2026} first established the equivalence of
spectrality and tiling in dimension seven. The proof below also gives
a linear-subspace complement. His proof of the spectral-to-tiling implication includes an examination
of Hadamard matrices of order thirty-two. The quarter-density theorem, Theorem~\ref{thm:quarter},
makes that examination unnecessary. We retain his elementary
cardinality bound, projection argument, and construction for small
tiling partners, all of which were proved above.

\begin{theorem}\label{thm:seven}
For a nonempty subset $E$ of $\mathbb Z_2^7$, the following are
equivalent: $E$ is spectral; $E$ tiles $\mathbb Z_2^7$; and $E$ has a
linear-subspace tiling complement.
\end{theorem}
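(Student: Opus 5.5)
The implication from a linear-subspace complement to tiling is trivial. The implication from a subspace complement to spectrality is the last assertion of Lemma~\ref{lem:projection}. It therefore remains to show two things: that every spectral set has a linear-subspace complement, and that every tile is spectral (and then also has a subspace complement). Throughout, $N=128$. We may translate $E$ to contain zero.

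\emph{Spectral implies subspace complement.} By Corollary~\ref{cor:cardinality-eight}, $|E|$ is a power of two, and by Theorem~\ref{thm:quarter} it is at most $32$, or equal to $64$ or $128$. Theorem~\ref{thm:quarter} settles $|E|=32=2^{7-2}$ with a plane complement, $|E|=64$ with a one-dimensional complement, and $|E|=128$ with the complement $\{0\}$. The cases $|E|\in\{1,2,4,8,16\}$ follow from Lemma~\ref{lem:small-spectral}. No classification of order-$32$ Hadamard matrices is needed.

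\emph{Tile implies spectral.} Let $E\oplus T=\mathbb Z_2^7$. Then $|E||T|=128$, so both cardinalities are powers of two. I split according to $|T|$.
\begin{itemize}
\item If $|T|\in\{1,2,4,8\}$, Lemma~\ref{lem:universal-small} shows directly that $E$ is spectral.
\item If $|T|\ge 16$, then $|E|\le 8$. Theorem~\ref{lem:small-tiles} gives $E$ a subspace complement $K$, and Lemma~\ref{lem:projection} makes $K^\perp$ a spectrum.
\end{itemize}
In either case $E$ is spectral, and the first part then yields a linear-subspace complement.

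The only potential obstacle is making sure the cardinality cases are exhaustive. In particular, the $|E|=32$ case on the spectral side must use the plane complement from Theorem~\ref{thm:quarter}, and on the tiling side it corresponds to $|T|=4$, which is covered by Lemma~\ref{lem:universal-small}. Once this bookkeeping is checked, every step is an invocation of an earlier result.
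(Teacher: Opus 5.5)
Your proposal is correct and follows essentially the same argument as the paper's proof: the spectral direction uses the same cardinality reduction, Theorem~\ref{thm:quarter} and Lemma~\ref{lem:small-spectral}. On the tiling side, your split by $|T|\le 8$ versus $|T|\ge 16$ is the same partition as the paper's split by $|E|\ge 16$ versus $|E|\le 8$, handled by the same two results (Lemma~\ref{lem:universal-small} and Theorem~\ref{lem:small-tiles} with Lemma~\ref{lem:projection}).
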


\begin{proof}
Suppose first that $E$ is spectral. By
Corollary~\ref{cor:cardinality-eight}, its cardinality is a power of
two. If $|E|\le16$, Lemma~\ref{lem:small-spectral} gives a
linear-subspace tiling complement. If $|E|=32$ or $64$,
the quarter-density theorem, Theorem~\ref{thm:quarter}, gives one of dimension two or one,
respectively. If $|E|=128$, the complement is $\{0\}$.

Now suppose that $E$ tiles. Its cardinality divides $128$, so it is
again a power of two. If $|E|\le8$, Theorem~\ref{lem:small-tiles}
gives a subspace complement and Lemma~\ref{lem:projection} makes
$E$ spectral. If $|E|\ge16$, any tiling complement has at most eight
elements; Lemma~\ref{lem:universal-small} makes $E$ spectral in this
case as well. The first part of the proof now gives a subspace
complement. Finally, a set with a subspace tiling complement plainly
tiles and is spectral by Lemma~\ref{lem:projection}.
\end{proof}

Apart from the published small-tile theorem and Hall's five-class
classification, this proof uses only the arguments given in the paper.
The additional checks attached to Hall's classification consist of the
five lists of sixteen patterns in Appendix~\ref{app:order16}. In
particular, neither the classification of Hadamard matrices of order
thirty-two nor the correctness of the dimension-seven computations in
\cite{Kadowaki2026} is needed for this proof.

\section{The eight-dimensional case}
\label{sec:eight}

We now prove the remaining case of Theorem~\ref{thm:main}. The quarter-density theorem, Theorem~\ref{thm:quarter}, treats spectral sets of size 64, and the preceding section treats spectral sets of size at most 16 and tiling complements with at most eight points. Two cases remain: spectral sets with 32 points, and tiles with 16 points. We discuss them separately.

\subsection{Spectral sets with thirty-two points}

\begin{lemma}\label{lem:order32}
Every canonical column set of a Hadamard matrix of order 32 whose dephased log-Hadamard matrix has rank at most eight over $\mathbb Z_2$ has a linear-subspace tiling complement.
\end{lemma}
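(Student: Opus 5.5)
The plan is to follow the order-sixteen argument of Lemma~\ref{lem:small-spectral}, replacing the printed table by a computer-checked certificate over the Kharaghani--Tayfeh-Rezaie classification of order $32$. The target for each canonical column set $C\subseteq\mathbb Z_2^r$, with $r\le8$ and $|C|=32$, is a surjective linear map $\mathbb Z_2^r\to\mathbb Z_2^5$ injective on $C$. By Lemma~\ref{lem:projection}, its kernel is then a linear-subspace tiling complement of $C$. Concretely, after dephasing, I will look for five columns $a_{c_1}+a_{c_0},\ldots,a_{c_5}+a_{c_0}$ (in the notation of the proof of Lemma~\ref{lem:small-spectral}) whose thirty-two relative sign patterns
\[
\bigl(H_{i,c_1}H_{i,c_0},\ldots,H_{i,c_5}H_{i,c_0}\bigr),\qquad 0\le i<32,
\]
are distinct. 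It is also acceptable to use any five functionals in the row space, that is, sums of columns, not only differences of single columns. The search may need this extra freedom when single columns fail.

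The steps, in order, are as follows.
\begin{enumerate}
\item Read McKay's list of the $13{,}710{,}027$ equivalence classes of Hadamard matrices of order $32$.
\item For each representative $H$ and for its transpose, dephase $H$ and compute the $\mathbb Z_2$-rank $r$ of the phase matrix. Both orientations are needed by the last assertion of Lemma~\ref{lem:equivalence}, since the classification may be taken only up to ordinary equivalence and transposition is not included.
\item Discard the cases with $r>8$. These are outside the lemma, and they cannot arise from spectral sets in $\mathbb Z_2^8$ by Lemma~\ref{lem:canonical}.
\item For each remaining case, search for a certificate consisting of five vectors in the row space of $L$ that separate the thirty-two points.
\end{enumerate}
Lemma~\ref{lem:equivalence} then transfers the conclusion from each representative to every matrix in its class, because affine equivalence of canonical sets preserves the existence of a subspace complement. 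A translate of a set with a subspace complement still has that complement.

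The expected obstacle is computational rather than conceptual, in two places.

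First, the number of classes is enormous. However, the rank filter should be very cheap, and I expect it to eliminate most classes, since generic matrices have rank well above eight. For the survivors, the rank is small, so the canonical set is a $32$-subset of $\mathbb Z_2^r$ with $r\le8$. I would search over the at most $2^8$ nonzero functionals: choose five that are independent and make the evaluation map injective, pruning greedily by requiring each new functional to split every current fiber.

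Second, I must consider what happens if some survivor admits no such map. Then the lemma would be false as stated, and it would have to be replaced by a direct search for a subspace complement. So I would first run the injective-projection search, and report separately both the certificates and the completeness check that every rank-$\le 8$ class and its transpose was examined.
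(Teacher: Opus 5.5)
Your plan is essentially the paper's proof: scan the order-$32$ classification, keep the representatives of rank at most eight, treat both orientations, and certify each canonical set. Your certificate, a surjection $\mathbb Z_2^r\to\mathbb Z_2^5$ injective on the set, is equivalent to the paper's $(r-5)$-dimensional $K$ with $A\oplus K=\mathbb Z_2^r$ (via Lemma~\ref{lem:projection} one way, and the quotient by $K$ the other); your reason for checking transposes is backwards but harmless, since with the $13{,}710{,}027$ ordinary classes the transposes are already represented, and it is the paper's list of $6{,}857{,}010$ classes taken up to transposition that forces both orientations, and similarly a set admitting no such projection would have no subspace complement at all, so your proposed fallback search could not help.
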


\begin{proof}
We use the complete classification in \cite{KharaghaniTayfehRezaie2013}, with the representatives distributed by McKay \cite{McKayData}. There are $6{,}857{,}010$ classes when a matrix and its transpose are identified. We refer to these as \emph{extended classes}. Dephasing a representative of each class and calculating its rank over $\mathbb Z_2$ leaves the following classes of rank at most eight:
\[
\begin{array}{c|rrrr|r}
\text{Rank}&5&6&7&8&\text{Total}\\ \hline
\text{Number of extended classes}&1&2&13&150&166.
\end{array}
\]
These counts were already obtained by Kadowaki \cite[Table~6.2]{Kadowaki2026}. The point-set equivalence lemma shows that we must check both orientations of each representative. This gives 332 canonical sets.

For each resulting set $A\subset\mathbb Z_2^r$, where $|A|=32$ and $5\le r\le8$, a certificate consists of $r-5$ independent vectors spanning a subspace $K$ for which
\begin{equation}\label{eq:order32certificate}
 \bigl|\{a+k:a\in A,\ k\in K\}\bigr|=2^r.
\end{equation}
There are exactly $|A||K|=32\cdot2^{r-5}=2^r$ pairs in this sum. Hence the displayed equality proves that every element has precisely one representation, or $A\oplus K=\mathbb Z_2^r$.

The accompanying programs produce and check such a certificate for each of the 332 sets. They also verify the Hadamard equations, reconstruct the point sets from the matrix records, and check the extraction from the complete classification. The input conventions, coverage checks, and reproduction commands are given in Appendix~\ref{app:computations}. Thus the finite verification proves the assertion for every allowed representative. Lemma~\ref{lem:equivalence} extends it to every matrix in the assertion.
\end{proof}

\begin{corollary}\label{cor:spectral-eight}
Every spectral subset of $\mathbb Z_2^8$ has a linear-subspace tiling complement.
\end{corollary}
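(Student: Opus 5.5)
The plan is to argue by cases on the cardinality $m=|E|$ of a spectral set $E\subseteq\mathbb Z_2^8$. By Corollary~\ref{cor:cardinality-eight}, $m$ is a power of two, so $m\in\{1,2,4,8,16,32,64,128,256\}$. The cases other than $m=32$ are already handled by earlier results:
\begin{itemize}
\item[\textbullet] If $m\le 16$, Lemma~\ref{lem:small-spectral} gives a linear-subspace tiling complement directly.
\item[\textbullet] If $m=64$ or $m=128$, the quarter-density theorem, Theorem~\ref{thm:quarter}, gives a complement that is a plane or a line, respectively.
\item[\textbullet] If $m=256$, then $E=G$ and $\{0\}$ is a subspace complement.
\end{itemize}

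For $m=32$, I would first translate $E$ and a spectrum $\Lambda$ so that both contain zero. Translation preserves spectrality and tiling, and a subspace complement of a translate is one for $E$ itself. Now form $L=(\lambda\cdot e)_{\lambda\in\Lambda,e\in E}$.

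Because $0\in\Lambda$ and $0\in E$, the matrix $L$ has a zero row and a zero column. After reordering so that these come first, $L$ is a dephased log-Hadamard matrix of order $32$. Its $\mathbb Z_2$-rank $r$ satisfies $r\le 8$ by Lemma~\ref{lem:canonical}. It also satisfies $r\ge5$, since the $32$ columns are distinct and so span a space of size at least $32$.

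Lemma~\ref{lem:canonical} then gives a surjective linear map $q:\mathbb Z_2^8\to\mathbb Z_2^r$ which maps $E$ bijectively onto a canonical column set $A$ of $L$. The Hadamard matrix $(-1)^{L}$ has order $32$ and dephased rank at most eight. Lemma~\ref{lem:order32} therefore gives a linear subspace $K\subseteq\mathbb Z_2^r$ with $A\oplus K=\mathbb Z_2^r$.

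Finally, Lemma~\ref{lem:projection} applies because $q$ is injective on $E$. It lifts $K$ to the linear subspace $q^{-1}(K)$, and $E\oplus q^{-1}(K)=\mathbb Z_2^8$.

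The only delicate point is whether Lemma~\ref{lem:order32} covers this particular $A$. That lemma is stated for canonical column sets of arbitrary Hadamard matrices of order $32$ with rank at most eight, and it already accounts for transposes and equivalence via Lemma~\ref{lem:equivalence}. So the main check is that $L$ is dephased with respect to the specific zero row and column, and that the canonical column set produced by Lemma~\ref{lem:canonical} agrees, up to an invertible linear map, with the one in Lemma~\ref{lem:order32}. Since the definition of canonical set is invariant under the choice of row basis, and invertible linear maps preserve subspace tilings, this should be immediate.
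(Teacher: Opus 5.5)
Your proposal is correct and follows the paper's proof essentially verbatim. You use the same cardinality split from Corollary~\ref{cor:cardinality-eight}, with sizes at most $16$ handled by Lemma~\ref{lem:small-spectral}, sizes $64$ and $128$ by Theorem~\ref{thm:quarter}, and size $32$ by projecting via Lemma~\ref{lem:canonical}, applying Lemma~\ref{lem:order32}, and lifting with Lemma~\ref{lem:projection}; your extra checks (simultaneous translation, dephasing, $5\le r\le 8$) are valid details that the paper leaves implicit.
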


\begin{proof}
By Corollary~\ref{cor:cardinality-eight}, its size is a power of two. Lemma~\ref{lem:small-spectral} treats sizes at most 16. For size 32, project onto a canonical column set by Lemma~\ref{lem:canonical}, apply Lemma~\ref{lem:order32}, and lift its subspace complement by Lemma~\ref{lem:projection}. The quarter-density theorem, Theorem~\ref{thm:quarter}, treats sizes 64 and 128. The whole group has the complement $\{0\}$.
\end{proof}

\subsection{Tiles with sixteen points}

An arbitrary tiling complement need not be a subspace. For this reason, we cannot assume at the outset that a sixteen-point tile is a graph on a four-dimensional subspace. We obtain that conclusion by relating a tiling to a perfect code and then passing to its translation kernel.

For a vector $x\in\mathbb Z_2^n$, its \emph{Hamming weight}, denoted by $\operatorname{wt}(x)$, is the number of nonzero coordinates. The \emph{Hamming distance} between $x$ and $y$ is $\operatorname{wt}(x+y)$ \cite{Hamming1950}. A \emph{perfect binary one-error-correcting code} of length 15 is a set $C\subset\mathbb Z_2^{15}$ such that the Hamming balls of radius one centered at its points partition the space. If
\[
 S=\{0,e_1,\ldots,e_{15}\},
\]
where the $e_i$ are the coordinate vectors, the perfect-code condition is exactly $S\oplus C=\mathbb Z_2^{15}$. It implies $|C|=2048$ and that distinct codewords have distance at least three. Codes are regarded as equivalent when one can be obtained from the other by a coordinate permutation and a translation.

\begin{definition}
The \emph{translation kernel} of a set $C\subset\mathbb Z_2^{15}$ is
\[
 M(C)=\{u\in\mathbb Z_2^{15}:C+u=C\}.
\]
\end{definition}

This is a linear subspace. If $0\in C$, then $M(C)\subset C$, since $C+u=C$ implies $u=0+u\in C$. Thus, for a perfect code containing zero, no nonzero vector of its translation kernel has weight one or two.

\begin{lemma}\label{lem:code-certificates}
Let $C$ be a perfect code of length 15 whose translation kernel $M$ has dimension at least seven. The image of $S=\{0,e_1,\ldots,e_{15}\}$ in $\mathbb Z_2^{15}/M$ has a linear-subspace tiling complement.
\end{lemma}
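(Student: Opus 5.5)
Since the statement concerns all perfect codes of length $15$ with a large translation kernel, the plan is to reduce it to a finite verification over the classification of \"Osterg\aa rd and Pottonen \cite{OstergardPottonen2009}. First we check that the property is invariant under code equivalence. A translation $C\mapsto C+v$ leaves $M(C)$ unchanged. A coordinate permutation $\pi$ maps $M(C)$ to $\pi(M(C))$ and maps $S$ to itself. It therefore induces an isomorphism $\mathbb Z_2^{15}/M(C)\to\mathbb Z_2^{15}/M(\pi C)$ carrying the image of $S$ to the image of $S$. A subspace tiling complement of one image is thus carried to one of the other. Consequently it suffices to treat one representative of each of the $5983$ equivalence classes, and only those whose kernel has dimension at least seven. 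We may also translate each representative so that $0\in C$, which gives $M\subset C$.

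Next we describe the quotient. Since $S\oplus C=\mathbb Z_2^{15}$ and $C$ is a union of cosets of $M$, the sets $S+u$, $u\in C$, are disjoint and cover everything. Passing to $Q=\mathbb Z_2^{15}/M$, the image $\bar S$ therefore tiles $Q$ with complement $\bar C$. Moreover, $S$ maps injectively into $Q$: two elements of $S$ differ by a vector of weight one or two, and the remark after the definition rules out such vectors in $M$. Hence $|\bar S|=16$, and $\dim Q=15-\dim M\le 8$. The goal becomes finding a subspace $K\le Q$ with $|K|=|Q|/16$ and $\bar S\oplus K=Q$. Equivalently, we need a linear map $Q\to\mathbb Z_2^4$ that is injective on $\bar S$. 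Its kernel is then the complement, by Lemma~\ref{lem:projection} applied to the quotient. If $\dim M=11$, then $C$ is linear and the complement is $\{0\}$. Here $\bar S$ is all of $Q\cong\mathbb Z_2^4$, as for the Hamming code. The cases $\dim M\in\{7,8,9,10\}$ give $\dim Q\in\{8,7,6,5\}$ and complements of dimension $4,3,2,1$.

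The certificate in each case is analogous to \eqref{eq:order32certificate}. For each representative we compute $M$, choose coordinates on $Q$ by extending a basis of $M$ to $\mathbb Z_2^{15}$, and list the sixteen vectors $\bar S$. We then exhibit $\dim Q-4$ independent vectors spanning $K$ with $|\bar S+K|=2^{\dim Q}$. The count of pairs is exactly $2^{\dim Q}$, so this equality proves that the sum is direct. The search is small, since $Q$ has at most $2^8$ elements. Alternatively, one can search over the at most $\binom{2^8}{4}$ choices of four functionals for one that separates $\bar S$.

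The main obstacle is not the search but completeness. We must verify that the extraction from the published classification keeps every class with $\dim M\ge7$, and that the kernels are computed correctly. We would recompute $M$ directly from the codeword lists and check perfection of each representative by $|C|=2048$ and minimum distance three. Separately, we would check that the kernel-dimension counts agree with those published. If some class admits no subspace complement, the lemma fails. We expect this not to happen, because large kernels force $\bar S$ into a low-dimensional space where an affine-like structure is likely. That expectation is heuristic, however, and the claim itself rests only on the finite check.
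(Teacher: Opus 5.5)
Your proposal is correct and takes the same route as the paper. You use invariance under translation and coordinate permutation to reduce to the Östergård--Pottonen representatives with $\dim M\ge7$, and for each you certify an $(11-\dim M)$-dimensional subspace of $\mathbb Z_2^{15}/M$ by checking that its sums with the sixteen images of $S$ are distinct. The paper supplies the outcome your plan leaves open: exactly $69$ classes qualify, with kernel dimensions $7,8,9,11$ (so your $\dim M=10$ case is vacuous), and all $69$ certificates pass.
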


\begin{proof}
The classification of \"Osterg\aa rd and Pottonen \cite{OstergardPottonen2009} contains 5983 equivalence classes. Precisely 69 have translation-kernel dimension at least seven, with distribution
\[
\begin{array}{c|rrrr|r}
\dim M&7&8&9&11&\text{Total}\\ \hline
\text{Number of classes}&48&17&3&1&69.
\end{array}
\]
The kernel distribution is also recorded in \cite{OstergardPottonenPhelps2010}.

For each of these 69 representatives, the accompanying certificate gives $11-\dim M$ independent vectors in the quotient which span a tiling complement of the image of $S$. The verifier checks the perfect-code partition, computes the translation kernel, forms the quotient map, and checks every sum in the asserted tiling. In particular, it does not infer a translation kernel by treating a nonlinear code as a linear code. The complete-source extraction and the quotient certificates are described in Appendix~\ref{app:computations}.

These checks suffice for all codes, rather than only for the chosen representatives. Translating $C$ leaves $M(C)$ unchanged. A coordinate permutation sends $M(C)$ to its permuted subspace and preserves $S$. It therefore induces a linear equivalence of the corresponding quotient point sets. Such an equivalence preserves the existence of a subspace complement.
\end{proof}

\begin{lemma}\label{lem:sixteen-tiles}
Every sixteen-point tile of affine dimension at most eight in a vector space over $\mathbb Z_2$ has a linear-subspace tiling complement.
\end{lemma}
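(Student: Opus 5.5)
Translate $E$ so that $0\in E$, and let $V=\langle E\rangle$, so $\dim V\le 8$. By the restriction remarks at the start of Section~\ref{sec:small-sets}, $E$ tiles $V$, and a subspace complement in $V$ extends to one in the ambient space. We may therefore assume the ambient space is $\mathbb Z_2^r$ with $r\le8$ and $E$ spanning it. Write $E=\{0,x_1,\ldots,x_{15}\}$. Define the linear map
\[
 \pi:\mathbb Z_2^{15}\to\mathbb Z_2^r,\qquad \pi(e_i)=x_i .
\]
It is surjective, since the $x_i$ span, and it sends $S$ bijectively onto $E$. Its kernel $\ker\pi$ has dimension $15-r\ge7$.

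Let $T$ be a tiling complement, so $E\oplus T=\mathbb Z_2^r$. Put $C=\pi^{-1}(T)$. We claim $S\oplus C=\mathbb Z_2^{15}$. Given $y$, there is a unique $s\in S$ with $\pi(y)+\pi(s)\in T$; here we use that $\pi$ is injective on $S$ and the tiling property. This is exactly the argument of Lemma~\ref{lem:projection} applied to $q=\pi$. Hence $C$ is a perfect code of length 15.

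Since $C$ is a union of cosets of $\ker\pi$, we have $\ker\pi\subseteq M(C)$. Therefore $\dim M(C)\ge 7$, and Lemma~\ref{lem:code-certificates} applies. It gives a subspace $K'$ of $\mathbb Z_2^{15}/M$ with $\bar S\oplus K'=\mathbb Z_2^{15}/M$, where $\bar S$ is the image of $S$ and $M=M(C)$.

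It remains to transfer this complement down to $\mathbb Z_2^r$. The quotient map $\mathbb Z_2^{15}\to\mathbb Z_2^{15}/M$ factors through $\pi$, because $\ker\pi\subseteq M$. Write it as $p\circ\pi$ with $p:\mathbb Z_2^r\to\mathbb Z_2^{15}/M$ linear and surjective; concretely, $\mathbb Z_2^{15}/M\cong\mathbb Z_2^r/\pi(M)$. Then $p(E)=\bar S$. The map $p$ is injective on $E$, because $\bar S$ has 16 distinct elements; this holds since $\bar S\oplus K'$ is a tiling. Lemma~\ref{lem:projection} now gives
\[
 E\oplus p^{-1}(K')=\mathbb Z_2^r,
\]
and $p^{-1}(K')$ is a linear subspace. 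This completes the argument.

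The main obstacle is the step guaranteeing $\dim M(C)\ge7$. This is exactly where the hypothesis of affine dimension at most eight enters, through $\ker\pi\subseteq M(C)$. All the remaining work is carried by the finite certificates of Lemma~\ref{lem:code-certificates}. A minor point to check is that an arbitrary, possibly nonlinear $T$ still gives a genuine perfect code; this follows because the lifting argument never used linearity of $T$.
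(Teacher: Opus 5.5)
Your argument follows the paper's proof almost line for line. You restrict to the span $V$, lift the tiling along $\pi\colon\mathbb Z_2^{15}\to V$ to the perfect code $C=\pi^{-1}(T)$, and get $\dim M(C)\ge 15-r\ge 7$ from $\ker\pi\subseteq M(C)$. You then apply Lemma~\ref{lem:code-certificates} and pull the complement back through the induced map $p\colon V\to\mathbb Z_2^{15}/M$. Not translating $T$ to contain $0$ is harmless, since $M(C)$ is translation invariant and Lemma~\ref{lem:code-certificates} does not require $0\in C$.

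The one step that is not justified as written is the injectivity of $p$ on $E$. Lemma~\ref{lem:code-certificates} says only that the image \emph{set} $\bar S$ has a subspace tiling complement. A set that tiles need not have sixteen elements: if two points of $S$ collapsed modulo $M$, the smaller set $\bar S$ could still tile with a larger complement $K'$. Lemma~\ref{lem:projection} would then fail; indeed $|E|\,|p^{-1}(K')|=16\cdot 2^r/|\bar S|$ would exceed $2^r$. The count $|\bar S|=16$ follows only if you also know $\dim K'=11-\dim M$. That is a feature of the certificates, not of the lemma's statement.

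What you need is $M\cap(S+S)=\{0\}$. Since $p\circ\pi$ is the quotient map modulo $M$, this is exactly injectivity of $p$ on $E$. There are two short ways to get it:
\begin{itemize}
\item The paper translates $T$ so that $0\in C$. Then $M\subseteq C$, so nonzero vectors of $M$ have weight at least three. But $s+s'$ has weight one or two for distinct $s,s'\in S$.
\item Alternatively, argue directly from $S\oplus C=\mathbb Z_2^{15}$. If $u=s+s'\in M$ with $s\ne s'$, then for any $c\in C$ also $c+u\in C$. Hence $s+c=s'+(c+u)$ is a second representation, a contradiction.
\end{itemize}
With either argument inserted, your proof is complete.
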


\begin{proof}
Translate the tile $E$ so that $0\in E$, and let $V$ be its linear span. Write $r=\dim V\le8$. If $E\oplus T=G$, then
\[
 E\oplus(T\cap V)=V.
\]
Indeed, in the unique representation of a point of $V$, its summand from $E$ belongs to $V$, so its other summand belongs to $V$ as well. Put $T_V=T\cap V$.

List the fifteen nonzero points of $E$ as $a_1,\ldots,a_{15}$, and define a linear map
\[
 H:\mathbb Z_2^{15}\longrightarrow V,\qquad H(e_i)=a_i.
\]
The map is surjective because $E$ spans $V$, and it maps $S$ bijectively onto $E$. Put $C=H^{-1}(T_V)$. We claim that $S\oplus C=\mathbb Z_2^{15}$. For $x\in\mathbb Z_2^{15}$, the unique decomposition of $H(x)$ in $E\oplus T_V$ specifies a unique $s\in S$ with $H(x+s)\in T_V$. Thus $x+s\in C$, and $x=s+(x+s)$ is the required unique representation. This proves the claim, so $C$ is a perfect code.

By translating $T_V$ we may arrange that $0\in T_V$, without changing the tiling property; then $0\in C$. Put $K_0=\ker H$ and $M=M(C)$. Every vector of $K_0$ preserves $C=H^{-1}(T_V)$, so
\[
 K_0\subset M,\qquad \dim M\ge\dim K_0=15-r\ge7.
\]
Since distinct points of $C$ have distance at least three and $M\subset C$, the quotient map modulo $M$ is injective on $S$: the sum of two distinct points of $S$ has weight one or two.

Under the identification $V=\mathbb Z_2^{15}/K_0$, we therefore have a surjective linear map
\[
 q:V\longrightarrow\mathbb Z_2^{15}/M
\]
which is injective on $E$ and takes $E$ onto the image of $S$. Lemma~\ref{lem:code-certificates} supplies a subspace complement of this image. Its inverse image under $q$ is a subspace complement of $E$ in $V$, by Lemma~\ref{lem:projection}. Adding any linear complement of $V$ in $G$ gives a subspace complement in $G$. Undoing the translation of $E$ preserves this complement.
\end{proof}

\begin{proof}[Proof of Theorem~\ref{thm:main} in dimension eight]
Corollary~\ref{cor:spectral-eight} proves that a spectral set has a subspace tiling complement. A set with a subspace tiling complement is spectral by Lemma~\ref{lem:projection}, and is plainly a tile.

It remains to prove that every tile $E$ is spectral. Its cardinality divides 256. If $|E|\le8$, Theorem~\ref{lem:small-tiles} gives a subspace complement. If $|E|=16$, Lemma~\ref{lem:sixteen-tiles} does so. For $|E|=32,64,128$, or 256, its tiling complement has at most eight points, and Lemma~\ref{lem:universal-small} shows that $E$ is spectral. This exhausts the possibilities.
\end{proof}

We can now identify the two uses of the larger classifications precisely. The order-32 classification enters only through Lemma~\ref{lem:order32}, and the perfect-code classification enters only through Lemma~\ref{lem:code-certificates}. Neither is used in the seven-dimensional proof. Kadowaki's cardinality, projection, and common-spectrum arguments have been proved in the preceding section, so the eight-dimensional proof does not depend on the acceptance of his dimension-seven theorem.

\section{Why the Fourier-support argument does not extend to one eighth}
\label{sec:eighth}

The quarter-density theorem, Theorem~\ref{thm:quarter}, suggests a possible approach to spectral sets
of size $64$ in $\mathbb{Z}_2^9$. If the Fourier support of a spectrum
for such a set contained a three-dimensional subspace, then
Lemma~\ref{lem:transfer} would give a tiling complement of size eight.
We might therefore hope that the Fourier support of every spectral set
of size $64$ in $\mathbb{Z}_2^9$ contains such a subspace.
The following example shows that this is not the case.
It gives an elementary presentation of the obstruction in
\cite{FergusonIosevich2026}; no computer calculation is needed for its
verification.

\begin{proposition}\label{prop:eighth-counterexample}
There is a spectral set $B\subseteq\mathbb{Z}_2^9$ of size $64$ whose
Fourier support contains no three-dimensional linear subspace.
Nevertheless, $B$ has a linear-subspace tiling complement.
\end{proposition}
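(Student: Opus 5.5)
We would look for $B$ as a graph over a six-dimensional subspace, so that the tiling complement is built in. Write $\mathbb Z_2^9=\mathbb Z_2^6\times\mathbb Z_2^3$, choose a map $f:\mathbb Z_2^6\to\mathbb Z_2^3$, and put
\[
 B=\{(x,f(x)):x\in\mathbb Z_2^6\},\qquad K=\{0\}\times\mathbb Z_2^3 .
\]
Then $B$ meets every coset of $K$ exactly once, so $B\oplus K=\mathbb Z_2^9$. This gives the second assertion, and Lemma~\ref{lem:projection} makes $B$ spectral with spectrum $K^\perp=\mathbb Z_2^6\times\{0\}$. The whole task is therefore to choose $f$ so that $\operatorname{supp}W_B$ contains no three-dimensional subspace.

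Next we compute the transform. We have
\[
 W_B(u,v)=\sum_{x\in\mathbb Z_2^6}(-1)^{u\cdot x+v\cdot f(x)},
\]
which is the Walsh transform of the Boolean component $f_v=v\cdot f$ evaluated at $u$. For $v=0$ this vanishes unless $u=0$, so $\operatorname{supp}W_B\cap K^\perp=\{0\}$. Hence any three-dimensional subspace $P$ in the support meets $K^\perp$ trivially. Since $\dim K^\perp=6$ and $\dim P=3$, $P$ projects isomorphically onto the last factor. Thus $P=\{(L v,v):v\in\mathbb Z_2^3\}$ for some linear map $L:\mathbb Z_2^3\to\mathbb Z_2^6$.

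The goal is therefore the following combinatorial condition on the sets $S_v=\operatorname{supp}W_{f_v}$, for $v\ne0$. No linear $L$ may satisfy $Lv\in S_v$ for all seven nonzero $v$. Since $L$ is determined by $a_i=Le_i$, this says that there are no $a_1,a_2,a_3$ with
\[
 a_1,\ a_2,\ a_3,\ a_1+a_2,\ a_1+a_3,\ a_2+a_3,\ a_1+a_2+a_3
\]
lying in the corresponding seven sets $S_v$. This is finite and checkable by hand if the $S_v$ are structured.

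This rules out the easy choices of $f$:
\begin{itemize}
\item Linear $f$ fails, since each $S_v$ is a single point depending linearly on $v$.
\item Vectorial bent $f$ fails, since every $S_v$ is all of $\mathbb Z_2^6$.
\item Disjoint quadratic blocks such as $(x_1x_2,x_3x_4,x_5x_6)$ fail, since each $S_v$ is a linear subspace containing $0$, and $L=0$ works.
\end{itemize}
The plan is to use components that are plateaued, whose Walsh supports are affine subspaces (cosets of linear subspaces). For a quadratic form the support of $W_{f_v}$ is a coset $c_v+R_v^\perp$, where $R_v$ is the radical. We would choose $f$ quadratic, for instance mixing pairs such as $f=(x_1x_2+x_5,\ x_3x_4+x_1x_6,\ x_5x_6+x_2x_3)$. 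The aim is that the offsets $c_v$, modulo the subspaces $R_v^\perp$, are not additive in $v$. Concretely, we want the seven coset conditions to impose, after adding the equations for $v$, $v'$ and $v+v'$, an inconsistent linear system on $(a_1,a_2,a_3)$.

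The main obstacle is exactly this choice of $f$ together with the verification of inconsistency. For each of the seven nonzero $v$ we would compute $R_v$ and $c_v$ by completing the square. The membership conditions are then linear equations in the $18$ coordinates of $(a_1,a_2,a_3)$, and we would exhibit a contradiction, most likely by summing the conditions over a line $\{v,v',v+v'\}$ and obtaining $0=1$. If no quadratic choice produces a contradiction, the fallback is to make one component cubic, such as $x_1x_2x_3$. Its Walsh transform has explicit zeros, and these can be used to exclude the remaining candidates $L$.
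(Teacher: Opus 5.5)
Your reduction is correct. For the graph $B$ of $f\colon\mathbb Z_2^6\to\mathbb Z_2^3$, the subspace $K=\{0\}\times\mathbb Z_2^3$ is a tiling complement, and $W_B(u,0)=0$ for $u\ne0$. So any three-dimensional subspace of $\operatorname{supp}W_B$ has the form $\{(Lv,v)\}$ with $Lv\in S_v$ for every $v\ne0$. But the proposition is an existence statement, and you leave the decisive step open: exhibiting an $f$ and proving that no such $L$ exists.

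The one explicit candidate you give fails. Take $f=(x_1x_2+x_5,\ x_3x_4+x_1x_6,\ x_5x_6+x_2x_3)$ and write $S_{110}$ for $S_{(1,1,0)}$, etc. Computing radicals gives $S_{100}=\{u_3=u_4=u_6=0,\ u_5=1\}$, $S_{010}=\{u_2=u_5=0\}$, $S_{001}=\{u_1=u_4=0\}$, $S_{110}=\{u_2=u_6,\ u_5=1\}$, $S_{101}=\{u_1=u_3,\ u_4=0\}$ and $S_{011}=\{u_2=u_4,\ u_1=u_5\}$. Also $f_{111}$ is bent, so $S_{111}=\mathbb Z_2^6$. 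The linear map with $Le_1=e_5$ and $Le_2=Le_3=0$ meets all seven conditions; for example $W_B(e_5,(1,1,0))=\sum_x(-1)^{x_1(x_2+x_6)+x_3x_4}=16$. Hence $\{(v_1e_5,v):v\in\mathbb Z_2^3\}$ is a three-dimensional subspace of $\operatorname{supp}W_B$.

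Your expected mechanism for the contradiction cannot work either. A combination of the equations for $v,v',v+v'$ whose left side vanishes identically in $L$ must use one functional $u\mapsto u\cdot r$ for all three, since $Lv$ and $Lv'$ vary independently. If $S_v\subseteq\{u\cdot r=c_v\}$, Fourier inversion gives $f_v(x+r)+f_v(x)\equiv c_v$, and likewise for $v'$ and $v+v'$. Additivity of $v\mapsto f_v$ then forces $c_{v+v'}=c_v+c_{v'}$, so equations from a single line never sum to $0=1$.

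The framework does go through with $f=(x_1x_2,x_1x_3,x_2x_3)$. Every $S_v$ lies in $\{u_4=u_5=u_6=0\}$. In the first three coordinates:
\begin{itemize}
\item $S_{100},S_{010},S_{001}$ are given by $u_3=0$, $u_2=0$, $u_1=0$;
\item $S_{110},S_{101},S_{011}$ are given by $u_2=u_3$, $u_1=u_3$, $u_1=u_2$;
\item $S_{111}$ is given by $u_1+u_2+u_3=1$, because the radical of $x_1x_2+x_1x_3+x_2x_3$ is spanned by $e_1+e_2+e_3$, where the form equals $1$.
\end{itemize}
Put $a_i=Le_i$. In the first six conditions, each of the nine coordinates $(a_i)_k$, $k\le3$, occurs an odd number of times. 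So their sum is $\sum_{k\le3}(a_1+a_2+a_3)_k=0$, contradicting the seventh condition. The contradiction uses all seven points of the Fano plane, not a line.

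This is in fact the paper's example in other coordinates. Take $v_1,v_2,v_3$ independent and $v_4=v_1+v_2$, $v_5=v_1+v_3$, $v_6=v_2+v_3$. Then $C=\{0,e_1,\ldots,e_6,j\}$ is the graph over $\langle e_1,e_2,e_3\rangle$ of $(x_1x_2,x_1x_3,x_2x_3)$, with values in $\ker q$ written in the basis $e_1+e_2+e_4$, $e_1+e_3+e_5$, $e_2+e_3+e_6$. Hence $C\times\mathbb Z_2^3$ is linearly equivalent to this $B$. The paper certifies the property through $\operatorname{supp}W_C=C+C$ and the triangle argument on edges of $C$. Your route, once completed, replaces that with a single inconsistent linear system.
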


\begin{proof}
Let $e_1,\ldots,e_6$ be the standard basis of $\mathbb{Z}_2^6$, put
$j=e_1+\cdots+e_6$, and define
\[
 C=\{0,e_1,e_2,e_3,e_4,e_5,e_6,j\}.
\]
We first show that $C$ is spectral. Let $v_1,\ldots,v_6$ be any six
of the seven nonzero vectors of $\mathbb{Z}_2^3$, and let $v_7$ be the
remaining nonzero vector. Define the linear map
$q:\mathbb{Z}_2^6\to\mathbb{Z}_2^3$ by $q(e_i)=v_i$.
The sum of all seven nonzero vectors of $\mathbb{Z}_2^3$ is zero,
because each coordinate is one four times. Thus
\[
 q(j)=v_1+\cdots+v_6=v_7.
\]
Consequently $q$ maps $C$ bijectively onto $\mathbb{Z}_2^3$.
It follows that $C\oplus\ker q=\mathbb{Z}_2^6$ and that $C$ is
spectral, by Lemma~\ref{lem:projection}.

For $u\in\mathbb{Z}_2^6$, let $\operatorname{wt}(u)$ be its number of
nonzero coordinates. Writing $w=\operatorname{wt}(u)$, we have
\begin{equation}
 W_C(u)=1+\sum_{i=1}^6(-1)^{u_i}+(-1)^{u_1+\cdots+u_6}
       =7-2w+(-1)^w.
 \label{eq:eighth-walsh}
\end{equation}
For $w=0,1,\ldots,6$, these values are respectively
\[
 8,\quad4,\quad4,\quad0,\quad0,\quad-4,\quad-4.
\]
Hence
\begin{equation}
 \operatorname{supp}W_C
 =\{u:\operatorname{wt}(u)\in\{0,1,2,5,6\}\}=C+C.
 \label{eq:eighth-support}
\end{equation}
The last equality follows directly by adding two points of $C$:
the nonzero sums are $e_i$, $e_i+e_k$ with $i\ne k$, $j+e_i$, and $j$.

We next show that $C+C$ contains no three-dimensional subspace.
First observe that the only nonempty even-sized subset of $C$ whose
sum is zero is $C$ itself. Indeed, a relation among
$e_1,\ldots,e_6,j$ either uses none of these vectors or uses all seven:
comparing the six coordinates proves this assertion. Including the
zero vector changes the number of selected points but not their sum.
It follows in particular that every nonzero point of $C+C$ has a
unique expression as the sum of an unordered pair of distinct points
of $C$.

It is useful to regard such a pair as an edge joining two points of
$C$. Suppose that three distinct nonzero points of $C+C$ sum to zero.
Take their three edges. The vertices incident to an odd number of
these edges form an even-sized subset of $C$ with sum zero. There are
at most six such vertices, so the preceding observation shows that
there are none. The three distinct edges therefore form a triangle.
Thus every two-dimensional subspace contained in $C+C$ has, for its
three nonzero points, the three edges of a triangle on $C$.

Suppose now that a three-dimensional subspace $U$ were contained in
$C+C$. Choose a plane $P\subseteq U$ and an $x\in U\setminus P$.
The three edges for $P\setminus\{0\}$ form a triangle. For each
$y\in P\setminus\{0\}$, the three points $x,y,x+y$ are distinct and
nonzero and belong to $C+C$. Their edges also form a triangle.
In particular, the edge for $x$ must meet each of the three edges of
the original triangle. The only edges that meet all three are the
three edges of that triangle itself. This would put $x$ in $P$,
a contradiction.

Finally, take
\[
 B=C\times\mathbb{Z}_2^3\subseteq
 \mathbb{Z}_2^6\times\mathbb{Z}_2^3=\mathbb{Z}_2^9.
\]
The map $(x,y)\mapsto(q(x),y)$ maps $B$ bijectively onto
$\mathbb{Z}_2^6$. Its kernel is a three-dimensional tiling complement,
so $B$ is spectral and $|B|=8\cdot8=64$.
For $u\in\mathbb{Z}_2^6$ and $v\in\mathbb{Z}_2^3$, its transform is
\[
 W_B(u,v)=W_C(u)\sum_{y\in\mathbb{Z}_2^3}(-1)^{v\cdot y}
 =\begin{cases}
 8W_C(u),&v=0,\\
 0,&v\ne0.
 \end{cases}
\]
Thus $\operatorname{supp}W_B=(C+C)\times\{0\}$, which contains no
three-dimensional subspace by the argument above.
\end{proof}

Notice that this is not a counterexample to Fuglede's conjecture.
It shows that a subspace tiling complement need not be found inside
the Fourier support of a specified spectrum by extending the proof of
the quarter-density theorem, Theorem~\ref{thm:quarter}. In fact, the same construction already gives
a spectral set of density one eighth in dimension six. Taking its
product with $\mathbb{Z}_2^3$ places the obstruction in the
dimension-nine setting of interest here.

\section{Further questions}
\label{sec:questions}

The remaining dimension over $\mathbb Z_2$ between the positive results proved here and the counterexample of Ferguson and Sothanaphan is dimension nine. The broader preliminary preprint \cite{FergusonIosevich2026} gives further reductions in that dimension, including a reduction of the spectral-to-tiling implication to spectral sets of size 64. Those additional arguments are not needed in the present paper. The following question isolates the case for which the quarter-density theorem, Theorem~\ref{thm:quarter}, no longer gives a tiling.

\begin{question}
Does every spectral subset of $\mathbb Z_2^9$ with 64 elements tile? More strongly, does every such set have a linear-subspace tiling complement?
\end{question}

Proposition~\ref{prop:eighth-counterexample} does not answer either question in the negative. Its set is itself a tile with a subspace complement. What the example rules out is a statement about the Fourier support of every spectrum, rather than the possibility of choosing a suitable spectrum. This distinction gives an equivalent formulation of the stronger question.

\begin{question}
If $E\subset\mathbb Z_2^9$ is spectral and $|E|=64$, must there be a spectrum $\Lambda$ for $E$ whose Fourier support contains a three-dimensional subspace?
\end{question}

If such a spectrum exists, Lemma~\ref{lem:transfer} gives a subspace tiling complement of $E$. Conversely, if $E\oplus K=\mathbb Z_2^9$ with $\dim K=3$, then $K^\perp$ is a spectrum of $E$ and its Fourier transform is supported precisely on $K$. Thus this formulation is indeed equivalent to the stronger question. A specified spectrum need not work, as our example shows.

There is also a natural question about the two finite verifications in dimension eight. The individual certificates are short, but their coverage uses two substantial published classifications.

\begin{question}
Can the subspace-complement assertions of Lemmas~\ref{lem:order32} and~\ref{lem:sixteen-tiles} be proved without using the complete classifications of Hadamard matrices of order 32 and perfect codes of length 15?
\end{question}

An argument using only the rank bound in the first case, or the existence of a translation kernel of dimension at least seven in the second, would give a more direct explanation of the finite results. The quarter-density theorem, Theorem~\ref{thm:quarter}, supplies such an explanation for the spectral cardinality 64, and for cardinality 32 in dimension seven. The example in Section~\ref{sec:eighth} explains one obstruction to obtaining the same explanation at the next density.

\section*{Acknowledgments}
The authors acknowledge collaboration with OpenAI's ChatGPT in preparing this draft, developing and checking the computer programs, and simplifying the presentation of the proofs.

\appendix

\section{The five matrices of order sixteen}\label{app:order16}

We give all the data for Lemma~\ref{lem:small-spectral}. The five
matrices are McKay's representatives~\cite{McKayData} of Hall's five
ordinary equivalence classes~\cite{Hall1961}. The completeness of this
list is the published classification; the calculation here only verifies
an additional property of each representative.

In Table~\ref{tab:order16-matrices}, a string gives one row of the
corresponding Hadamard matrix, with $1$ denoting $+1$ and $0$ denoting
$-1$. Rows and columns are indexed from zero, and the leftmost digit is
column zero. Thus the first row of each matrix consists entirely of
$+1$'s. This convention is only for displaying the matrices; these
strings are not the rows of their log-Hadamard matrices.

\begin{table}[htbp]
\centering
\begingroup
\setlength{\tabcolsep}{3pt}
\scriptsize
\begin{tabular}{r|lllll}
Row&$H_0$&$H_1$&$H_2$&$H_3$&$H_4$\\ \hline
0&\texttt{1111111111111111}&\texttt{1111111111111111}&\texttt{1111111111111111}&\texttt{1111111111111111}&\texttt{1111111111111111}\\
1&\texttt{1111111100000000}&\texttt{1111111100000000}&\texttt{1111111100000000}&\texttt{1111111100000000}&\texttt{1111111100000000}\\
2&\texttt{1111000011110000}&\texttt{1111000011110000}&\texttt{1111000011110000}&\texttt{1111000011110000}&\texttt{1111000011110000}\\
3&\texttt{1111000000001111}&\texttt{1111000000001111}&\texttt{1111000000001111}&\texttt{1111000000001111}&\texttt{1111000000001111}\\
4&\texttt{1100110011001100}&\texttt{1100110011001100}&\texttt{1100110011001100}&\texttt{1100110011001100}&\texttt{1100110011001100}\\
5&\texttt{1100110000110011}&\texttt{1100110000110011}&\texttt{1100110000110011}&\texttt{1100110000110011}&\texttt{1100110000110011}\\
6&\texttt{1100001111000011}&\texttt{1100001111000011}&\texttt{1100001111000011}&\texttt{1100001111000011}&\texttt{1100001110101010}\\
7&\texttt{1100001100111100}&\texttt{1100001100111100}&\texttt{1100001100111100}&\texttt{1100001100111100}&\texttt{1100001101010101}\\
8&\texttt{1010101010101010}&\texttt{1010101010101010}&\texttt{1010101010101010}&\texttt{1010101010101010}&\texttt{1010101011000011}\\
9&\texttt{1010101001010101}&\texttt{1010101001010101}&\texttt{1010101001010101}&\texttt{1010100110010101}&\texttt{1010101000111100}\\
10&\texttt{1010010110100101}&\texttt{1010010110100101}&\texttt{1010010110011001}&\texttt{1010011001011001}&\texttt{1010010110011001}\\
11&\texttt{1010010101011010}&\texttt{1010010101011010}&\texttt{1010010101100110}&\texttt{1010010101100110}&\texttt{1010010101100110}\\
12&\texttt{1001100110011001}&\texttt{1001100110010110}&\texttt{1001100110010110}&\texttt{1001101001010110}&\texttt{1001100110010110}\\
13&\texttt{1001100101100110}&\texttt{1001100101101001}&\texttt{1001100101101001}&\texttt{1001100101101001}&\texttt{1001100101101001}\\
14&\texttt{1001011010010110}&\texttt{1001011010011001}&\texttt{1001011010100101}&\texttt{1001011010100101}&\texttt{1001011010100101}\\
15&\texttt{1001011001101001}&\texttt{1001011001100110}&\texttt{1001011001011010}&\texttt{1001010110011010}&\texttt{1001011001011010}\\
\end{tabular}
\endgroup
\caption{Representatives of the five classes of Hadamard matrices of order sixteen.}
\label{tab:order16-matrices}
\end{table}

For each matrix, Table~\ref{tab:order16-patterns} specifies the five
columns used in \eqref{eq:order16-pattern}. It also displays the sixteen
patterns, in row order. Here a sign pattern $(s_1,s_2,s_3,s_4)$ is
represented by the integer
\[
 8\frac{1-s_1}{2}+4\frac{1-s_2}{2}
       +2\frac{1-s_3}{2}+\frac{1-s_4}{2}.
\]
We write these integers as hexadecimal digits, using $A$ through $F$
for $10$ through $15$. Every row of the pattern table contains each of the
sixteen digits exactly once. This is the required bijection.

\begin{table}[htbp]
\centering
\small
\begin{tabular}{c|c|l}
Matrix&$(c_0,c_1,c_2,c_3,c_4)$&Patterns in row order\\ \hline
$H_0$&$(0,1,2,4,8)$&\texttt{0 1 2 3 4 5 6 7 8 9 A B C D E F}\\
$H_1$&$(0,1,2,8,12)$&\texttt{0 3 1 2 4 7 5 6 8 B 9 A D E C F}\\
$H_2$&$(0,1,8,10,12)$&\texttt{0 7 1 6 2 5 3 4 8 F A D B C 9 E}\\
$H_3$&$(0,1,8,10,12)$&\texttt{0 7 1 6 2 5 3 4 8 B E D F C 9 A}\\
$H_4$&$(0,8,9,10,12)$&\texttt{0 F 1 E 2 D 4 B 3 C 6 9 7 8 5 A}\\
\end{tabular}
\caption{Five projection certificates for the matrices in Table~\ref{tab:order16-matrices}.}
\label{tab:order16-patterns}
\end{table}

The standalone program \texttt{anc/verify\_order16.py} contains these
same matrices and column choices. It checks row and column
orthogonality, reconstructs all eighty patterns, and checks the
corresponding dephased ranks $4,5,6,7,7$. If the supplied copy of
McKay's \texttt{had16.txt} is present, it also compares every entry with
the embedded data. The command is
\begin{verbatim}
python3 anc/verify_order16.py
\end{verbatim}
Only Python 3 and its standard library are required. The mathematical
checks are explicit conditional statements and remain active when
Python is run with optimization enabled. The optional program
\texttt{anc/verify\_cardinalities.py} recomputes the integer tables in
Corollary~\ref{cor:cardinality-eight}; those tables can also be checked
by the displayed additions alone.

\section{The finite computations}
\label{app:computations}

The computer-assisted part of the proof uses two published classifications.
The first is the classification of Hadamard matrices of order $32$ by
Kharaghani and Tayfeh-Rezaie \cite{KharaghaniTayfehRezaie2013}, in the
representation distributed by McKay \cite{McKayData}. The second is the
classification of perfect binary one-error-correcting codes of length $15$
by {\"O}sterg{\aa}rd and Pottonen \cite{OstergardPottonen2009}.
We use the completeness of these classifications as published mathematical
results. The programs supplied with this paper verify the extraction of
the relevant records and the assertions we make about them. They do not
reconstruct either classification.

All calculations use integer arithmetic. A vector of
$\mathbb Z_2^r$ is represented by an integer between $0$ and $2^r-1$;
its $j$th coordinate is the coefficient of $2^j$. Addition is therefore
bitwise exclusive-or. In particular, no numerical tolerance is involved
in deciding orthogonality, rank, or whether a set tiles.

\subsection{Hadamard matrices of order 32}

The source file \texttt{had32\_tr.txt.gz} contains $6{,}857{,}010$
representatives, with transposition included in the equivalence relation.
Every line contains $32$ hexadecimal integers, one for each row. A set bit
represents $+1$ and an unset bit represents $-1$. We use the compressed
file with SHA-256 digest
\begin{verbatim}
e51a18c458e1bae995e335e5067b53078e7e8c6901278e755d33c1b281547520
\end{verbatim}
The program \texttt{hadamard\_scan.cpp} reads every record and selects those
whose dephased log-Hadamard matrix has rank at most eight. It has two modes.
The first eliminates rows using the highest nonzero coordinate as pivot.
The second forms the dephased columns directly from the signs and eliminates
them using the lowest nonzero coordinate as pivot. In the second calculation
one may stop as soon as the rank exceeds eight. The two complete scans give
exactly the same records, not merely the same numbers of records. Their
common result is
\[
\begin{array}{c|rrrr|r}
\text{rank}&5&6&7&8&\text{total}\\ \hline
\text{number of representatives}&1&2&13&150&166.
\end{array}
\]

For each representative, both the matrix and its transpose are considered.
The program \texttt{hadamard\_verify.py} first checks the Hadamard equations
by counting the positions in which each pair of rows differs. It then
constructs a canonical column set $A\subseteq\mathbb Z_2^r$.
For each of the resulting $332$ sets, the file
\texttt{hadamard\_certificates.json} gives a basis of a subspace $K$ of
dimension $r-5$. The verifier checks independence of this basis and counts
every sum $a+k$. Each element of $\mathbb Z_2^r$ occurs exactly once.
This proves $A\oplus K=\mathbb Z_2^r$.

The connection between a certificate and its source matrix is also checked.
The coordinate rows of the supplied point list must have rank $r$ and,
together with the source matrix, must still have rank $r$. Thus they span
exactly the row space of that matrix. This check uses a separate elimination
routine and does not require the verifier to reproduce the particular basis
chosen when the certificate was generated.

The certificates may be regenerated by enumerating the reduced
row-echelon bases of all $(r-5)$-dimensional subspaces and accepting the first
one whose nonzero vectors avoid $A+A$. There are $97{,}155$ three-dimensional
subspaces of $\mathbb Z_2^8$. The validity of a returned certificate depends
only on the direct count of sums, not on the completeness or order of this
search. A failure to find a certificate causes an error and is never treated
as a proof that a set does not tile.

\subsection{Perfect codes of length 15}

The source file \texttt{perfect15}, supplied with version~3 of
\cite{OstergardPottonen2009}, contains $5{,}983$ representatives. Its
SHA-256 digest is
\begin{verbatim}
3bcb77324a7770c44f64b785d67d8da0f003113451575e06ca24852c7e5084f3
\end{verbatim}
Each record contains $2{,}048$ hexadecimal words of length sixteen.
The coordinate represented by $2^3$ is always zero. Deleting this coordinate
gives the fifteen coordinates used by the programs.

The program \texttt{perfect\_scan.cpp} checks all $32{,}768$ sums of a
codeword and an element of $\{0,e_1,\ldots,e_{15}\}$, for every source record.
They are distinct, so each record is a perfect code. The program then finds
every translation period of the code. Since the representatives contain
zero, every period is itself a codeword, so it is enough to test all
$2{,}048$ candidate translations.

An independent calculation verifies each translation kernel using the
integer Walsh transform of the code indicator. If $C$ is a code and $M$
is its translation kernel, then
\[
M=(\operatorname{supp}W_C)^\perp.
\]
Indeed, translation by $u$ multiplies $W_C(v)$ by $(-1)^{u\cdot v}$.
Fourier inversion shows that it preserves $C$ exactly when $u$ annihilates
the Fourier support. The program computes a basis for the span of this
support, checks orthogonality of every period to this basis, and checks the
dimensions. Thus the two calculations determine the same entire kernel.

The $69$ records whose kernels have dimension at least seven have distribution
\[
\begin{array}{c|rrrr|r}
\dim M&7&8&9&11&\text{total}\\ \hline
\text{number of representatives}&48&17&3&1&69.
\end{array}
\]
The full extraction agrees with every supplied selected record.
For each selected code, \texttt{perfect\_verify.py} recomputes its kernel,
chooses a basis of $M^\perp$, and uses this basis to realize the quotient
$\mathbb Z_2^{15}/M$. The sixteen images of $0,e_1,\ldots,e_{15}$ are
distinct. The supplied certificate gives $11-\dim M$ independent vectors
in this quotient, and the verifier checks all sums with the sixteen images.
Every point of the quotient occurs exactly once. All $69$ certificates pass.
The perfect-code reduction in the main text then lifts these complements
to the original sixteen-point tiles.

\subsection{Reproducing the checks}

The ancillary directory contains the selected source records, the positive
certificates, and the programs that generate and verify them. From that
directory, the command
\begin{verbatim}
python3 run_checks.py
\end{verbatim}
checks the supplied records and certificates. It requires Python~3.10 or
later and only the standard library. The command
\begin{verbatim}
python3 run_checks.py --full
\end{verbatim}
also retrieves the complete classification files, checks their digests,
and repeats both complete Hadamard scans and the complete perfect-code
scan with the independent Fourier calculation. This additionally requires
a C++17 compiler and the zlib development files. Already downloaded source
files may instead be specified on the command line, as described in the
ancillary README. The option \texttt{--regenerate} reconstructs the positive
subgroup certificates before verifying them.

The programs reject malformed records, repeated identifiers, missing
certificates, and incorrect sums. Their essential checks do not use Python
assertions and remain active with optimization enabled. The complete scans
and all positive certificate checks were run for this draft. Their results,
including the distinction between checking selected records and checking
complete-source coverage, are recorded with the ancillary files.

\end{document}